\tikzset{join/.code=\tikzset{after node path={%
\ifx\tikzchainprevious\pgfutil@empty\else(\tikzchainprevious)%
edge[every join]#1(\tikzchaincurrent)\fi}}}
\tikzset{>=stealth',every on chain/.append style={join},
         every join/.style={->}}
\renewcommand{\epsilon}{\varepsilon}
\renewcommand{\setminus}{\smallsetminus}
\renewcommand{\emptyset}{\varnothing}
\newtheorem*{namedtheorem}{\theoremname}
\newcommand{\theoremname}{testing}
\newtheorem{theorem}{Theorem}[section]
\newtheorem{proposition}[theorem]{Proposition}
\newtheorem{corollary}[theorem]{Corollary}
\newtheorem{lemma}[theorem]{Lemma}
\newtheorem{question}[theorem]{Question}
\theoremstyle{definition}
\newtheorem{example}[theorem]{Example}
\newtheorem{definition}[theorem]{Definition}
\theoremstyle{remark}
\newtheorem{remark}[theorem]{Remark}
\newcommand{\Z}{\mathbb Z}
\newcommand{\N}{\mathbb N}
\newcommand{\CA}{\mathcal{A}}
\newcommand{\GL}{\operatorname{GL}}
\newcommand{\PSL}{\operatorname{PSL}}
\newcommand{\cohom}[3]{H^{{\raise1pt\hbox{$\scriptstyle#1$}}}(#2\>\!,#3)}
\newcommand{\tatecohom}[3]%
  {\widehat H^{{\raise1pt\hbox{$\scriptstyle#1$}}}(#2\>\!,#3)}
\newcommand{\Cohom}[3]%
  {H^{{\raise1pt\hbox{$\scriptstyle#1$}}}\big(#2\>\!,#3\big)}
\newcommand{\Tatecohom}[3]%
  {\widehat H^{{\raise1pt\hbox{$\scriptstyle#1$}}}\big(#2\>\!,#3\big)}
\newcommand{\homol}[3]{H_{{\lower1pt\hbox{$\scriptstyle#1$}}}(#2\>\!,#3)}
\newcommand{\homolog}[2]{H_{{\lower1pt\hbox{$\scriptstyle#1$}}}(#2)}
\newcommand{\im}{\operatorname{Im}}
\renewcommand{\implies}{\Rightarrow}
\newcommand{\Isom}{\operatorname{Isom}}
\DeclareMathOperator{\Out}{Out}
\DeclareMathOperator{\Aut}{Aut}
\DeclareMathOperator{\PAut}{PAut}
\DeclareMathOperator{\Inn}{Inn}
\newcommand{\lk}{\operatorname{lk}}
\newcommand{\st}{\operatorname{st}}
\newcommand{\slk}{\operatorname{slk}}
\newcommand{\sst}{\operatorname{sst}}
\begin{document}

\title[Representations of $\PAut(A_\Gamma)$]{Representations of pure symmetric automorphism groups of RAAGs}

\author{Javier Aramayona \& Conchita Mart\'inez P\'erez}

\date{\today}

\thanks{The first named author is partially supported by RYC-2013-13008 and the second one by Gobierno de Arag\'on and European Regional 
Development Funds. Both authors were funded by grant MTM2015-67781}

\maketitle

\begin{abstract}
We study representations of the pure symmetric automorphism group $\PAut(A_\Gamma)$ of a RAAG $A_\Gamma$  with defining graph $\Gamma$. 

We first construct a homomorphism from $\PAut(A_\Gamma)$ to the direct product of  a RAAG and a finite direct product of copies of $F_2 \times F_2$; moreover, the image of $\PAut(A_\Gamma)$ under this homomorphism  is surjective onto each factor. As a consequence, we obtain interesting actions of $\PAut(A_\Gamma)$ on non-positively curved spaces

We then exhibit, for connected $\Gamma$, a RAAG which property contains $\Inn(A_\Gamma)$ and embeds as a normal subgroup of $\PAut(A_\Gamma)$. We end with a discussion of the linearity problem for $\PAut(A_\Gamma)$. \end{abstract}

\section{Introduction}
 Given a  group $G$ with a finite generating set $X$, the {\em pure symmetric automorphism group} $\PAut(G)$ is the group of those automorphisms of $G$ that  send every generator in $X$ to a conjugate of itself. In this paper, we will be interested in representations of pure symmetric automorphism groups of {\em right-angled Artin groups} (RAAGs) with respect to the standard generating set;
 see section \ref{sec:defs} for definitions.

An important and motivating example is that of the free group $F_n$ on $n$ letters $x_1,\ldots,x_n$. Humphries \cite{Hum} proved that $\PAut(F_n)$ is generated by the  {\em partial conjugations} $x_j \mapsto x_ix_jx_i^{-1}$. This revealed a strong relation between $\PAut(F_n)$ and braid groups, for Goldsmith \cite{Goldsmith} had previously showed that the subgroup of $\Aut(F_n)$ generated by partial conjugations is isomorphic to the fundamental group of the configuration space of $n$ unknotted, unlinked circles in $\mathbb{S}^3$. Shortly afterwards, McCool  \cite{McCool} gave an explicit presentation of $\PAut(F_n)$ in terms of partial conjugations, where every relation is a commutativity relation between (products of) generators. 

For a general RAAG $A_\Gamma$, Humpries's result above was extended by Laurence \cite{Laurence2}, who proved that $\PAut(A_\Gamma)$ is generated by partial conjugations. In addition, Toinet \cite{Toinet} proved that  $\PAut(A_\Gamma)$ has a presentation similar in spirit to McCool's presentation of $\PAut(F_n)$. This was subsequently refined by Koban-Piggott \cite{KP}, who showed that it suffices to consider a finite set of commutation relations between (products of) partial conjugations of $A_\Gamma$.

\subsection{Subdirect images of $\PAut(A_\Gamma)$} 
The first aim of this paper is to use Koban-Piggott's presentation \cite{KP} of $\PAut(A_\Gamma)$ in order to produce a (non-injective) homomorphism of $\PAut(A_\Gamma)$ to the product of a  RAAG times a direct product of copies of $F_2 \times F_2$. Recall that a subgroup of a direct product of groups is said to be a {\em subdirect product} if its projection onto each direct factor is surjective. We will show:

\begin{theorem}
For every graph $\Gamma$ there exist a graph $\Delta$, a number $N\ge 0$, and a homomorphism  \[ \PAut(A_\Gamma) \to A_{\Delta} \times \prod_{i=1}^N (F_2 \times F_2)\] whose image is a subdirect product. Moreover, the image of every partial conjugation has infinite order. 
\label{thm:repraag}
\end{theorem}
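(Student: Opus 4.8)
The plan is to work directly with the Koban--Piggott presentation of $\PAut(A_\Gamma)$, whose generators are partial conjugations $\alpha_{v,C}$ (conjugating the vertex generator $v$ by a vertex $w$ ranging over a connected component $C$ of $\Gamma \setminus \st(v)$), and whose relators are all commutation relations between certain products of these generators. The idea is that a presentation of this shape maps naturally onto a suitable \emph{partially commutative} target. Concretely, I would build a graph $\Delta$ whose vertex set is the set of partial conjugations (or of the ``blocks'' of partial conjugations that appear as single letters in the Koban--Piggott relators), and I would declare two vertices of $\Delta$ adjacent precisely when the corresponding generators commute in $\PAut(A_\Gamma)$ according to the presentation. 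There is then a tautological homomorphism $\PAut(A_\Gamma) \to A_\Delta$ sending each partial conjugation to the corresponding generator of $A_\Delta$; it is well-defined because every Koban--Piggott relator is a commutation relation that we have built into $\Delta$, and it is surjective because the generators of $A_\Delta$ are hit. This already gives the ``$A_\Delta$'' factor and surjectivity onto it.

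The reason the RAAG $A_\Delta$ alone is not enough --- and why the $F_2 \times F_2$ factors appear --- is that some of the Koban--Piggott relators involve \emph{products} of partial conjugations that commute, rather than individual generators commuting; also, certain pairs of partial conjugations satisfy relations that are not captured by a RAAG structure (for instance, $\alpha_{v,C}$ and $\alpha_{w,D}$ commuting only after one is replaced by a product). To handle these, I would isolate the finitely many ``bad'' configurations in the presentation --- each one involving a bounded number of generators with relations of the form found in $F_2 \times F_2 = \langle a,b\rangle \times \langle c,d\rangle$ (two commuting free groups of rank $2$) --- and map the relevant generators into a dedicated copy of $F_2 \times F_2$ in a way that respects exactly those relations. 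Taking the product over the (finitely many, hence $N < \infty$) such configurations, together with the map to $A_\Delta$, yields the homomorphism \[ \PAut(A_\Gamma) \to A_\Delta \times \prod_{i=1}^N (F_2 \times F_2). \] Surjectivity onto each $F_2 \times F_2$ factor is arranged by construction: the four free generators of the $i$-th factor are images of four suitably chosen partial conjugations whose only relations in $\PAut(A_\Gamma)$, among themselves, are the two commutations defining $F_2 \times F_2$.

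For the last assertion --- that every partial conjugation has infinite-order image --- I would argue factor by factor. If a given partial conjugation $\alpha_{v,C}$ appears as a generator of $A_\Delta$ (i.e. is one of the ``letters'' used to build $\Delta$), then its image in $A_\Delta$ is a standard generator of a RAAG and hence has infinite order, and we are done. Otherwise $\alpha_{v,C}$ must be one of the generators routed through some $F_2 \times F_2$ factor, where --- again by the explicit construction --- it maps to one of the free generators $a, b, c$ or $d$, which has infinite order in $F_2 \times F_2$. Either way the image of $\alpha_{v,C}$ in the product has infinite order, since it already does in one coordinate.

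The main obstacle I anticipate is the \emph{bookkeeping of the Koban--Piggott relators}: one must classify precisely which relations among partial conjugations are ``RAAG-type'' (a single commutation of generators, to be absorbed into $\Delta$) and which are the finitely many ``$F_2 \times F_2$-type'' exceptions, and then check that the proposed assignment of generators to the two kinds of factors is simultaneously consistent with \emph{all} relators --- i.e., that no generator is forced into contradictory images. This is essentially a careful case analysis driven by the combinatorics of $\Gamma$ (components of $\Gamma \setminus \st(v)$, adjacency, the dominance/link conditions governing when two partial conjugations commute), and getting the graph $\Delta$ and the count $N$ exactly right is where the real work lies; the well-definedness, surjectivity, and infinite-order claims are then comparatively formal.
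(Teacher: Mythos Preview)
Your outline has a genuine gap at the very first step. You define $\Delta$ to have all partial conjugations as vertices, with edges recording the pairwise commutations, and then assert that the ``tautological'' map $\PAut(A_\Gamma)\to A_\Delta$ is well-defined. It is not. A SIL-type relator has the form $[c_{A,v}c_{B,v},\,c_{A,w}]=1$, where neither $[c_{A,v},c_{A,w}]$ nor $[c_{B,v},c_{A,w}]$ holds individually; in a RAAG, $[xy,z]=1$ forces $[x,z]=[y,z]=1$, so this relator does \emph{not} hold in your $A_\Delta$. Since the map to the product is well-defined only if each coordinate map is, adding $F_2\times F_2$ factors afterwards cannot repair this. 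The paper's $\Delta$ is therefore much smaller than yours: it is spanned only by those partial conjugations \emph{not involved in any SIL-type relation}, and every generator that does appear in such a relation is sent to $1$ in the $A_\Delta$-coordinate.

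The second gap is in your treatment of the $F_2\times F_2$ factors. You say the four partial conjugations in a ``bad configuration'' map to the four standard generators of $F_2\times F_2$, and that their only mutual relations are the two commutations. But the four generators in question satisfy, in addition to $[c_{A,v},c_{B,v}]=[c_{A,w},c_{C,w}]=1$, the two SIL relations $[c_{A,v}c_{B,v},c_{A,w}]=1$ and $[c_{A,w}c_{C,w},c_{A,v}]=1$; sending them to $a,b,c,d$ in $\langle a,b\rangle\times\langle c,d\rangle$ does not satisfy these. The actual trick, which your sketch is missing, is twofold: first, SIL relations come in \emph{dual pairs} (this is the paper's property~(D)), so $N$ is half the number of SIL relations; second, for each such pair one sends the four generators to $(a,1)$, $(a^{-1},a)$, $(b,1)$, $(b^{-1},b)$ in $F_2\times F_2$. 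The point is that $(a,1)\cdot(b,1)(b^{-1},b)=(a,1)(1,b)$ commutes, which is exactly the SIL relation. One must then also verify that these assignments, made simultaneously across all dual pairs, remain compatible with the type~(i)--(ii) commutations --- this is the ``bookkeeping'' you flag, but it requires the specific images above, not generic generators.
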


In fact, the homomomorphism of Theorem \ref{thm:repraag} is constructed explicitly,  and $N$ and $\Delta$ are determined in terms of the graph $\Gamma$. In this direction, following \cite{Charney} we say that a pair of vertices $v,w$ of $\Gamma$ form a {\em SIL-pair}  if $v \notin \lk(w)$ and $\Gamma \setminus \lk(v)\cap \lk(w)$ has a connected component $Y$ with $v,w \notin Y$.
 Then the number $N$ in Theorem \ref{thm:repraag} is equal to  half the number of {\em SIL-type} relations in Koban-Piggott's presentation of $\PAut(A_\Gamma)$ (see the comment after the proof of Theorem  \ref{thm:repraag}), and $A_\Delta$ is the RAAG obtained from $\PAut(A_\Gamma)$ by killing every partial conjugation that  
 appears  in a SIL-type relation. In the specific case when $A_\Gamma=F_n$, every partial conjugation appears in a SIL-type relation, and thus we have $\Delta=\emptyset$ and $N=\frac{n(n-1)(n-2)}{2}$.

It is easy to see that the homomorphism given by Theorem \ref{thm:repraag} is not injective unless $\PAut(A_\Gamma)$ is a RAAG itself; see Proposition \ref{lem:nonfaithful}. Hence a natural question is:

\begin{question}
Determine the kernel of the homomorphism in Theorem \ref{thm:repraag}. 
\end{question}

\noindent{\bf Applications to  ${\rm CAT}(0)$-actions.} 
A result of Bridson \cite[Theorem 1.1]{Bridson} asserts that, for $n$ sufficiently large, if $\Aut(F_n)$ acts by (semisimple) isometries on a complete ${\rm CAT}(0)$ space, then every transvection (and thus every partial conjugation also) is elliptic, i.e. it fixes a point. By considering induced actions, the same result holds for finite-index subgroups of $\Aut(F_n)$ too. 

However, the presence of transvections is crucial for these results. Indeed, Koban-Piggott's presentation of $\PAut(F_n)$ \cite{KP} implies that $\PAut(F_n)$ (which recall is generated by all partial conjugations) has non-trivial abelianization, and thus acts by hyperbolic isometries on a ${\rm CAT}(0)$ space $X$. However, the action so obtained is {\em abelian}, in the sense that the image of the homomorphism $\PAut(F_n) \to \Isom(X)$ is abelian. 

In sharp contrast, composing the homomorphism $\rho$ from Theorem \ref{thm:repraag} with a faithful representation of $F_2$ into (say) $\PSL(2,\Z)$, we obtain the following:

\begin{corollary}
For every $n>0$, $\PAut(F_n)$ admits a non-abelian ${\rm CAT}(0)$-action where every partial conjugation acts hyperbolically.
\label{cor:hyp}
\end{corollary}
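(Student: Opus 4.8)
The plan is to deduce Corollary~\ref{cor:hyp} directly from Theorem~\ref{thm:repraag} together with two elementary facts: first, that any group which surjects onto a group $H$ inherits every isometric action of $H$; and second, that $\PSL(2,\Z)$ acts on the hyperbolic plane $\mathbb H^2$ (a $\catzero$ space) with elements of infinite order acting as hyperbolic isometries, while $F_2 < \PSL(2,\Z)$ contains non-commuting elements.

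\begin{proof}[Proof of Corollary~\ref{cor:hyp}]
Since $A_{F_n}=F_n$ is the free group, by the discussion following Theorem~\ref{thm:repraag} every partial conjugation of $F_n$ appears in a SIL-type relation, so that $\Delta=\emptyset$ and $N=\tfrac{n(n-1)(n-2)}{2}$. For $n\le 2$ we have $N=0$ and the statement is vacuous or follows from the abelian case already discussed (indeed $\PAut(F_1)=1$ and $\PAut(F_2)\cong\Z$), so assume $n\ge 3$, whence $N\ge 1$. Theorem~\ref{thm:repraag} thus provides a homomorphism
\[
\rho\colon \PAut(F_n)\longrightarrow \prod_{i=1}^{N}(F_2\times F_2)
\]
whose image is a subdirect product, and such that $\rho$ sends every partial conjugation to an element of infinite order.

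Now fix a faithful representation $\iota\colon F_2\hookrightarrow \PSL(2,\Z)<\Isom(\mathbb H^2)$; such a representation exists, for instance realising $F_2$ as the subgroup of $\PSL(2,\Z)$ generated by two parabolic elements with disjoint fixed-point pairs, or simply any non-abelian free subgroup of finite index. Composing $\rho$ with the product of copies of $\iota$ on each of the $2N$ free factors, we obtain a homomorphism
\[
\PAut(F_n)\;\xrightarrow{\ \rho\ }\;\prod_{i=1}^{N}(F_2\times F_2)\;\longrightarrow\;\prod_{i=1}^{2N}\PSL(2,\Z)\;\longrightarrow\;\Isom\bigl((\mathbb H^2)^{2N}\bigr),
\]
where the last arrow is the diagonal action on the product $\catzero$ space $X=(\mathbb H^2)^{2N}$, an isometry of a product acting coordinatewise. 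Call the resulting homomorphism $\varphi\colon\PAut(F_n)\to\Isom(X)$.

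It remains to check the two asserted properties. For hyperbolicity: let $c$ be a partial conjugation. Its image $\rho(c)$ has infinite order in $\prod_{i=1}^N(F_2\times F_2)$, hence has infinite order in at least one free factor; since $\iota$ is injective, $\varphi(c)$ has infinite order in the corresponding $\PSL(2,\Z)$ factor. An infinite-order element of $\PSL(2,\Z)$ is either hyperbolic or parabolic; as we may choose $\iota$ with purely hyperbolic (equivalently, loxodromic) image — e.g. a free subgroup generated by two hyperbolic elements in general position, the commutator subgroup of $\PSL(2,\Z)$ being free of rank $2$ and acting by hyperbolic isometries — we conclude that $\varphi(c)$ is a hyperbolic isometry of $X$. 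For non-abelianity of the image: since $\rho$ has subdirect image, its projection to the first $F_2\times F_2$ factor is onto, and in particular the image contains a non-abelian free group; composing with the injective $\iota\times\iota$ keeps this non-abelian, so $\varphi(\PAut(F_n))$ is non-abelian. Thus $\varphi$ is the desired non-abelian $\catzero$-action in which every partial conjugation acts hyperbolically.
\end{proof}

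The only genuine subtlety — and the point that deserves the care above — is ensuring that infinite order in the target is upgraded to \emph{hyperbolic} rather than merely \emph{parabolic} behaviour; this is why one must select the free subgroup of $\PSL(2,\Z)$ to consist of hyperbolic elements, rather than taking the standard generators (one of which is parabolic). Everything else is the routine transport of a $\catzero$-action along a surjection together with the fact that finite products of $\catzero$ spaces are $\catzero$.
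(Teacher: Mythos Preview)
Your argument follows exactly the route the paper indicates (the paper's ``proof'' is the single sentence preceding the corollary: compose $\rho$ with a faithful representation $F_2\hookrightarrow\PSL(2,\Z)$), and the main line for $n\ge 3$ is correct. Two small factual slips are worth fixing. First, $\PAut(F_2)$ is not $\Z$: the two partial conjugations $c_{12},c_{21}$ are precisely the inner automorphisms by $x_2$ and $x_1$, so $\PAut(F_2)=\Inn(F_2)\cong F_2$; the $n=2$ case is then handled directly by embedding $F_2$ as a Schottky group. Second, the commutator subgroup of $\PSL(2,\Z)$ is \emph{not} purely hyperbolic (it is the fundamental group of the once-punctured torus and contains parabolics), so drop that example; your other example---a Schottky subgroup generated by two hyperbolic elements with suitably separated axes---is the correct one, and such subgroups do exist inside $\PSL(2,\Z)$.
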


\subsection{Normal RAAG subgroups of $\PAut(A_\Gamma)$} A well-known corollary of a result of Davis-Januszkiewicz \cite{DavisJanus} is that RAAGs are linear. As a consequence, we may use the homomorphism constructed in Theorem \ref{thm:repraag} to construct a non-faithful linear representation of $\PAut(A_\Gamma)$. In light of the analogy between $\PAut(F_n)$ and braid groups alluded to above, a natural (open) question is whether $\PAut(A_\Gamma)$ is linear. It is easy to see that the group of {\em inner automorphisms} $\Inn(A_\Gamma)$ of $A_\Gamma$, which is isomorphic to the quotient of $A_\Gamma$ by its center, is itself a RAAG that embeds as a normal subgroup of $\PAut(A_\Gamma)$.

 In the second part we construct, for connected $\Gamma$, a graph $\hat{\Gamma}$ such that $A_{\hat{\Gamma}}$ properly contains $\Inn(A_\Gamma)$, and it still embeds in $\PAut(A_\Gamma)$ thus providing a big linear normal subgroup of $\PAut(A_\Gamma)$. More concretely, we will show:

\begin{theorem} Let $\Gamma$ be a connected graph. Then there exists a right-angled Artin group $A_{\hat \Gamma}$ and an injective map  
$$A_{\hat \Gamma} \to\PAut(A_\Gamma)$$
whose image is normal and contains $\Inn(A_\Gamma)$. Moreover if $\Gamma$ has no SIL pair, this map is an isomorphism.
\label{thm:main2}
\end{theorem}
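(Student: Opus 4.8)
I would argue from the Koban--Piggott finite presentation of $\PAut(A_\Gamma)$ \cite{KP}. Its generators are the (finitely many) partial conjugations $\alpha_{v,C}$, with $v$ a vertex of $\Gamma$ and $C$ a connected component of $\Gamma\setminus\st(v)$, and all of its relations are commutator relations: ``elementary'' ones of the form $[\alpha_{v,C},\alpha_{w,D}]=1$, and finitely many ``SIL-type'' relations, each a commutator between a product of two partial conjugations and a third one (in the spirit of McCool's extra relation for $\PAut(F_n)$), attached to a SIL pair $\{v,w\}$ and a component of $\Gamma\setminus(\lk(v)\cap\lk(w))$ missing $v$ and $w$. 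Call a partial conjugation \emph{SIL-involved} if it occurs in some SIL-type relation. Recall also that $\iota_v:=\prod_C\alpha_{v,C}$ is conjugation by $v$, that $\Inn(A_\Gamma)=\langle\iota_v:v\in V(\Gamma)\rangle$, and that for $\Gamma$ connected $\Inn(A_\Gamma)$ is the RAAG on $\Gamma$ with its central vertices deleted.

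I then take $\mathcal G\subset\PAut(A_\Gamma)$ to be the generating set obtained from the partial conjugations by discarding the SIL-involved ones and adjoining the inner automorphisms $\iota_v$ (retaining a non-redundant subset), and let $\hat\Gamma$ be the graph with vertex set $\mathcal G$ in which two vertices are joined precisely when they commute in $\PAut(A_\Gamma)$. Sending each vertex of $\hat\Gamma$ to the corresponding element defines a homomorphism $\phi\colon A_{\hat\Gamma}\to\PAut(A_\Gamma)$; it is well defined because the edge set of $\hat\Gamma$ was declared to be the set of commuting pairs (the needed commutations being read off from the Koban--Piggott relations), and its image contains $\Inn(A_\Gamma)$ since each $\iota_v\in\mathcal G$.

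The heart of the matter is the injectivity of $\phi$: one must show $\langle\mathcal G\rangle$ has no relations besides the commutations recorded in $\hat\Gamma$. I would do this by a Tietze manipulation of the Koban--Piggott presentation: in each SIL-type relation $[\alpha_{v,C}\alpha_{w,D},\gamma]=1$ adjoin $\alpha_{v,C}\alpha_{w,D}$ as a new generator, turning that relation into an elementary commutation. In the resulting presentation, $\langle\mathcal G\rangle$ is generated by a subset of the generators whose mutual relations are all commutators, and it remains to verify that no consequence of the SIL-type relations is a nontrivial relation among those generators; granting this, $\langle\mathcal G\rangle$ is identified with the RAAG $A_{\hat\Gamma}$. (Equivalently, one may try to build a retraction $\PAut(A_\Gamma)\to A_{\hat\Gamma}$ fixing $\mathcal G$.) This verification---that deleting precisely the SIL-involved partial conjugations is exactly enough to kill all the ``hidden'' relations---is the main obstacle.

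For normality it suffices to conjugate each element of $\mathcal G$ by an arbitrary partial conjugation $\beta$ of $A_\Gamma$ and check the result lies in $\langle\mathcal G\rangle$: conjugating $\iota_v$ gives $\iota_{\beta(v)}$, a conjugate of $\iota_v$ inside $\Inn(A_\Gamma)\subseteq\langle\mathcal G\rangle$ (the classical normality of $\Inn$), while for a non-SIL-involved $\alpha_{v,C}$ the Koban--Piggott conjugation formulas express $\beta\alpha_{v,C}\beta^{-1}$ as a product of partial conjugations---again non-SIL-involved---and inner automorphisms, hence as an element of $\langle\mathcal G\rangle$. Finally, if $\Gamma$ has no SIL pair then there are no SIL-type relations and hence no SIL-involved partial conjugations, so $\mathcal G$ is the full set of partial conjugations (the $\iota_v$ now being redundant) and the Koban--Piggott presentation is a presentation of a RAAG; thus $\phi$ is onto, and an isomorphism by the above.
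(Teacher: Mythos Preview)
Your proposal has a genuine gap exactly where you flag it: the injectivity of $\phi$. You write ``it remains to verify that no consequence of the SIL-type relations is a nontrivial relation among those generators; granting this\ldots'' --- but that verification \emph{is} the content of the theorem, and neither the Tietze manipulation nor the retraction is actually carried out. Your normality argument is also incomplete: you invoke ``Koban--Piggott conjugation formulas'' expressing $\beta\alpha_{v,C}\beta^{-1}$ as a product of non-SIL-involved partial conjugations and inners, but the presentation consists only of commutator relations, and when $\alpha_{v,C}$ and $\beta$ correspond to the two dominant components of a (non-SIL or SIL) pair they do not commute and no relation in the presentation computes the conjugate for you.

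The paper's construction is substantially different from yours, and its shape is what makes the injectivity and normality arguments go through. Rather than discarding SIL-involved generators and adjoining inners, the paper defines a refinement $\sst(v)\subseteq\st(v)$ (via a notion of ``$v$-SIL-irrelevant'' vertices of $\lk(v)$) and takes as generators the partial conjugations $c_{L,v}$ for $L$ a connected component of $\Gamma\setminus\sst(v)$; each $c_{L,v}$ is typically a \emph{product} of several standard partial conjugations. A combinatorial lemma shows that for any SIL pair $(v,u)$ the component of $\Gamma\setminus\sst(v)$ containing $u$ absorbs the dominant and all shared components, so every SIL-type relation becomes an honest commutation among the $c_{L,v}$'s; this is also what drives normality, since conjugating $c_{L,v}$ by a dominant $c_{C,u}$ equals conjugating by the enlarged generator $c_{T,u}$. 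Injectivity is then proved not by Tietze moves or a retraction but by a short-exact-sequence comparison: one embeds $\Inn(A_\Gamma)\cong A_{\Gamma_0}$ into $A_{\hat\Gamma}$ with normal image and \emph{free abelian} quotient $Q$, sets up the obvious commutative ladder over $1\to\Inn(A_\Gamma)\to\im(\varphi)\to\hat Q\to1$, and checks directly that the induced map $Q\to\hat Q$ is injective (using that each $L$ meets $\Gamma\setminus\st(v)$). It is not clear that your set $\mathcal G$ generates the same subgroup, nor that your $\hat\Gamma$ coincides with the paper's.
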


In \cite[Section 3]{ChV2}, Charney and Vogtmann consider a subgroup $K$ of the {\em outer} automorphism group of $A_\Gamma$ which is generated by cosets of certain partial conjugations, and which they prove is abelian. As it turns out, a similar reasoning to that of the proof of Theorem \ref{thm:main2} implies that the lift $\tilde K$ of $K$ to $\Aut(A_\Gamma)$ is a RAAG, and that it embeds as a normal subgroup of $\PAut(A_\Gamma)$. However, in general  $\tilde K$ will be properly contained in the group $A_{\hat \Gamma}$ given by Theorem \ref{thm:main2}. See Remark \ref{rem:CV} for more details.  

\medskip

The plan of the paper is as follows. In section \ref{sec:defs} we will give the necessary background on right-angled Artin  groups and their automorphism groups.  Then in section
\ref{sec:raags} we proceed to construct the homomorphism of Theorem \ref{thm:repraag}. Finally, in section \ref{sec:conch} we prove Theorem \ref{thm:main2}.


\medskip

\noindent{\bf Acknowledgements.} These ideas sprouted from conversations during the $11^{ th}$ {\em Barcelona Weekend in Group Theory} in May 2016. The authors are grateful to the organizer of the conference, Pep Burillo.

\section{Graphs, RAAGs, and their automorphisms}
\label{sec:defs}

In this section we will briefly review the necessary definitions needed for the proofs of our main results. 

\subsection{Graphs}\label{graphs}  Throughout, $\Gamma$ will denote a finite simplicial graph. We will write $V(\Gamma)$ and $E(\Gamma)$, respectively, for the set of vertices and edges of $\Gamma$. 

Given a vertex $v\in V(\Gamma)$, the {\em link} $\lk_\Gamma(v)$ of $v$ in $\Gamma$ is the set of those vertices of $\Gamma$ which are connected by an edge to $v$. The {\em star} of $v$ is $st_\Gamma(v) := \lk_\Gamma(v) \cup \{v\}$. If there is no risk of confusion, we will simply write $\lk(v)$ and $\st(v)$ in order to relax notation. 

As mentioned in the introduction, we say that the vertices $v,w$ {\em form a SIL} if $v \notin \lk(w)$ and $\Gamma \setminus \lk(v)\cap \lk(w)$ has a connected component which does not contain either $v$ or $w$. 

Suppose that $v,w$ are not linked. The following terminology comes from \cite{DW} (see also \cite{GPR}). A connected component of $\Gamma - \st(v)$ that is also a connected component of $\Gamma - \st(w)$ will be called a {\em shared} component. The unique component of $\Gamma - \st(v)$ that contains $w$ will be called the {\em dominant} component of $v$. Finally, a {\em subordinate component} of $w$ is any connected component of $\Gamma - \st(w)$ that is contained in the dominant component of $v$. By \cite[Lemma 2.1]{DW}, any connected component of $\Gamma - \st(v)$ is of one of these types; moreover $v$ and $w$ form a SIL if and only if there is some shared component.

\subsection{Right-angled Artin  groups} Let $\Gamma$ be a finite simplicial graph. The {\em right-angled Artin group} (RAAG, for short) defined by $\Gamma$ is the group with presentation \[A_\Gamma= \langle v \in V(\Gamma) \mid [v,w] = 1 \iff vw \in E(\Gamma)\rangle . \]

As usual, in order to keep the notation under control,  we are blurring the distinction between vertices of $\Gamma$ and the corresponding generators of $A_\Gamma$. 

Notable examples are the extreme cases when $\Gamma$ has no edges, so that $A_\Gamma$ is a free group, and when $\Gamma$ is a complete graph, in which case $A_\Gamma$ is a free abelian group.

\subsection{Automorphisms of RAAGs} 
Let $A_\Gamma$ the RAAG defined by the graph $\Gamma$, and consider its automorphism group $\Aut(A_\Gamma)$. In the light of the precedent paragraph, for a fixed number $n$ of vertices of $\Gamma$, the group $\Aut(A_\Gamma)$ interpolates between the cases of $\Aut(F_n)$ and $\GL(n,\Z) = \Aut(\Z^n)$.

Laurence \cite{Laurence2} and Servatius \cite{Servatius} gave an explicit finite set of generators for $\Aut(A_\Gamma)$ for arbitrary $\Gamma$. We now briefly recall these generators:

\begin{enumerate}
\item {\em Graphic automorphisms.} These are the elements of $\Aut(A_\Gamma)$ induced by the symmetries of $\Gamma$. 
\item {\em Inversions.} Given $v \in V(\Gamma)$, the inversion on $v$ is the automorphism that maps $v$ to $v^{-1}$ and fixes the rest of generators of $A_\Gamma$. 
\item {\em Transvections.} Given vertices $v,w\in V(\Gamma)$ with $\lk(v) \subset \st(w)$, the transvection $t_{vw}$ is the automorphism of $A_\Gamma$ given by
$$\Bigg\{\begin{aligned}
t_{vw}(v)&=vw, \\
t_{vw}(z)&=z,\, z\neq v.\\
\end{aligned}$$
\item {\em Partial conjugations.} Given $v\in V(\Gamma)$ and a connected component $A$ of $\Gamma - \st(v)$, the partial conjugation of $A$ by $v$ is the automorphism $c_{A,v}$ of $A_\Gamma$ given by $$\Bigg\{\begin{aligned}
c_{A,v}(w)&=v^{-1}wv, w\in A\\
c_{A,v}(z)&=z,\, z\notin A.\\
\end{aligned}$$
\end{enumerate}

Laurence \cite{Laurence2} and Servatius \cite{Servatius} proved: 

\begin{theorem}[\cite{Laurence2,Servatius}]
$\Aut(A_\Gamma)$ is generated by the four types of automorphisms described above. In particular, it is finitely generated.
\label{thm:LS}
\end{theorem}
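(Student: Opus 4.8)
The plan is to prove the Laurence--Servatius theorem by a Whitehead-style peak-reduction argument organized around the \emph{domination preorder} on $V(\Gamma)$: declare $v \preceq w$ when $\lk(v) \subseteq \st(w)$, and write $v \sim w$ when $v \preceq w \preceq v$. The four families of generators interact cleanly with this preorder: graphic automorphisms permute $\sim$-classes, inversions adjust signs within a class, a transvection $t_{vw}$ exists exactly when $v \prec w$, and partial conjugations are precisely the automorphisms acting trivially on the abelianization $A_\Gamma^{\mathrm{ab}} \cong \Z^{V(\Gamma)}$. The proof then splits into two stages: reducing a general $\phi \in \Aut(A_\Gamma)$, modulo the first three families, to a \emph{conjugating automorphism} --- one sending each generator to a conjugate of a generator --- and then treating conjugating automorphisms by partial conjugations. (Historically the first stage is already present in Servatius's argument under a centralizer hypothesis, and Laurence removed that hypothesis.)

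First I would analyse the linear part $\bar\phi \in \GL(|V(\Gamma)|,\Z)$ induced on $A_\Gamma^{\mathrm{ab}}$. Using that $\phi$ permutes conjugacy classes of parabolic (elliptic) elements and preserves the lattice of their centralizers --- here invoking the description of centralizers in a RAAG --- one shows that $\bar\phi$ must be block-triangular with respect to $\preceq$: each diagonal block is a signed permutation of a $\sim$-class and each off-diagonal entry is supported on a genuine domination $v \prec w$. Composing $\phi$ with suitable graphic automorphisms and inversions makes the diagonal the identity, and composing with suitable transvections $t_{vw}$ clears the off-diagonal entries one $\preceq$-level at a time; after these finitely many reductions the resulting automorphism acts trivially on $A_\Gamma^{\mathrm{ab}}$, hence sends each $v$ to a conjugate of $v$, i.e. is conjugating.

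For a conjugating automorphism $\phi$ I would run a peak reduction with the partial conjugations, using the length $\|\phi\| = \sum_{v \in V(\Gamma)} \ell(\phi(v))$, where $\ell$ is the geodesic length in $A_\Gamma$ computed via the standard normal form (heaps of pieces / the Salvetti complex). After factoring out an inner automorphism so that every $\phi(v)$ is cyclically reduced, the heart of the argument is a Whitehead-type lemma: if $\phi$ is conjugating but not already a product of partial conjugations, then some $c_{A,v}$ satisfies $\|c_{A,v}\phi\| < \|\phi\|$. One proves this by tracking, in the normal forms, where the conjugators $g_v$ (with $\phi(v) = g_v^{-1} v g_v$) overlap, and showing the overlaps are organized so that a single well-chosen partial conjugation simultaneously shortens the relevant words without lengthening the others. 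Inducting on $\|\phi\|$ finishes the conjugating case and hence the theorem; finite generation is then immediate, since each of the four families is finite.

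The main obstacle is the peak-reduction lemma of the last paragraph. Cancellation in a RAAG is only partially ordered, not linear, so the cancellation between the conjugators $g_v$ cannot be analysed as in the free group, and verifying that the reductions for different generators do not conflict requires exactly the ``shared component'' / SIL combinatorics that controls commutation among partial conjugations --- precisely the phenomena highlighted in Section~\ref{graphs}. A secondary difficulty is making the triangularization step rigorous: ruling out spurious domination cycles and ordering the transvection reductions correctly relies on the full classification of centralizers in RAAGs.
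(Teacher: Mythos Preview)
The paper does not prove this theorem: it is quoted from \cite{Laurence2,Servatius} without argument, so there is no proof in the paper to compare your attempt against. Your sketch does follow roughly the historical two-stage strategy (reduce an arbitrary automorphism to a conjugating one, then handle conjugating automorphisms by a Whitehead-type peak reduction on partial conjugations), and the second stage is indeed where the real work lies in Laurence's argument.

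There is, however, a genuine gap in your first stage. You assert that after composing with graphic automorphisms, inversions and transvections the resulting automorphism ``acts trivially on $A_\Gamma^{\mathrm{ab}}$, hence sends each $v$ to a conjugate of $v$''. This implication is false already for $A_\Gamma = F_n$ with $n\ge 3$: the Torelli subgroup $\mathrm{IA}_n$ strictly contains $\PAut(F_n)$, since e.g.\ the commutator transvection $x_i \mapsto x_i[x_j,x_k]$ is trivial on abelianization but $x_i[x_j,x_k]$ is a cyclically reduced word of length $5$ and hence not conjugate to $x_i$. (Relatedly, your parenthetical that ``partial conjugations are precisely the automorphisms acting trivially on the abelianization'' is false as a statement about $\Aut(A_\Gamma)$; it is only true as a statement about which of the four generator \emph{types} lie in the Torelli subgroup.) Consequently the reduction to a conjugating automorphism cannot be read off from $\bar\phi \in \GL(|V(\Gamma)|,\Z)$ alone. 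Servatius's argument instead analyzes centralizers and supports inside $A_\Gamma$ directly, and Laurence's extension removing Servatius's hypothesis requires a considerably more delicate combinatorial analysis than the block-triangularization you outline.
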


For the sake of completeness, although it will not be needed here, we remark that Day \cite{Day} subsequently proved that $\Aut(A_\Gamma)$ is finitely presented, and gave an explicit presentation.

\subsection{Pure symmetric automorphism groups}
We will consider the subgroup $\PAut(A_\Gamma)$ of $\Aut(A_\Gamma)$ consisting of those automorphisms that send every generator of $A_\Gamma$ to a conjugate of itself.  Laurence proved: 

\begin{theorem}[Laurence \cite{Laurence2}]
$\PAut(A_\Gamma)$ coincides with the subgroup of $\Aut(A_\Gamma)$ generated by partial conjugations. 
\end{theorem}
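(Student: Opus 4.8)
The plan is a length‑reduction argument in the spirit of Whitehead's peak‑reduction method, adapted to RAAGs. The inclusion of the subgroup generated by partial conjugations into $\PAut(A_\Gamma)$ is immediate: for a partial conjugation $c_{A,v}$ and a vertex $w$ of $\Gamma$, the defining formula gives $c_{A,v}(w)=v^{-1}wv$ if $w\in A$ and $c_{A,v}(w)=w$ otherwise, so $c_{A,v}$ sends each generator to a conjugate of itself; since $\PAut(A_\Gamma)$ is a subgroup of $\Aut(A_\Gamma)$, it contains the subgroup generated by all partial conjugations.

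For the converse, given $\phi\in\PAut(A_\Gamma)$ I would fix, for each vertex $v$, an element $g_v\in A_\Gamma$ with $\phi(v)=g_v^{-1}vg_v$ of minimal word length (any two such choices differ by an element of the centralizer $C_{A_\Gamma}(v)=\langle\st(v)\rangle$), and set $\|\phi\|:=\sum_{v\in V(\Gamma)}|g_v|$. If $\|\phi\|=0$ then every $g_v$ is trivial, so $\phi$ fixes each generator and $\phi=\mathrm{id}$ is the empty product of partial conjugations. The strategy is to induct on $\|\phi\|$: one finds a partial conjugation $c$ with $\|\phi c^{\pm1}\|<\|\phi\|$, and since $\phi c^{\pm1}$ again lies in $\PAut(A_\Gamma)$, the inductive hypothesis then expresses it --- and hence $\phi$ --- as a product of partial conjugations.

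To produce the reducing move I would look for a vertex $x$ and a subset $A\subseteq V(\Gamma)$ which is a union of connected components of $\Gamma-\st(x)$, such that composing $\phi$ with $c_{A,x}^{\pm1}$ on the appropriate side replaces each $g_w$ with $w\in A$ by a strictly shorter element while leaving $g_v$ unchanged for $v\notin A$, so that $\|\phi\|$ strictly decreases. Concretely this happens when every $g_w$ with $w\in A$ ends, in a suitable reduced expression, in the block $\phi(x)=g_x^{-1}xg_x$, so that right‑multiplication by $\phi(x)^{-1}$ cancels it off; in the special case $\phi(x)=x$ --- the only one arising when $A_\Gamma$ is free --- this is simply stripping a final letter $x$ off the conjugators of all vertices of $A$ simultaneously.

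The main obstacle is to guarantee that such a pair $(x,A)$ exists whenever $\phi\ne\mathrm{id}$, and to make the length bookkeeping precise. First, one must check that the set of vertices whose conjugator "ends in $\phi(x)$'' is saturated, i.e.\ a union of connected components of $\Gamma-\st(x)$; this should be forced by the relations $\phi([v,w])=\phi(v)\phi(w)\phi(v)^{-1}\phi(w)^{-1}=1$ for $vw\in E(\Gamma)$, via the description of centralizers and of commuting conjugates of generators in a RAAG. Second, one must actually locate a usable $x$, and here it is essential that $\phi$ be an \emph{automorphism}, not merely an endomorphism preserving conjugacy classes: surjectivity forces the $\phi(v)$ to generate $A_\Gamma$, which constrains the tuple $(g_v)$ enough to prevent the "outermost conjugating layers'' from cycling without a sink. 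A genuine new difficulty compared with the free‑group case is that reduced expressions in a RAAG are unique only up to shuffling commuting letters, so "outermost letter'' and the associated length estimates must be handled with the RAAG normal‑form calculus rather than naively. (Alternatively one could start from an expression for $\phi$ in the Laurence--Servatius generators of Theorem~\ref{thm:LS} and argue that the graphic automorphisms, inversions and transvections appearing must cancel, but controlling such an expression does not seem any easier than the length argument above.)
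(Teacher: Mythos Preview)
The paper does not give its own proof of this statement: it is stated with attribution to Laurence \cite{Laurence2} and used as a black box, with no argument supplied. There is therefore nothing in the paper to compare your proposal against.

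For what it is worth, your sketch is in the right spirit---Laurence's original argument is indeed a peak-reduction/length-reduction analysis of the conjugating words, and the difficulties you flag (saturation of the set of vertices whose conjugator ends in a given block, non-uniqueness of reduced words in a RAAG, and the need to use that $\phi$ is an automorphism rather than merely an endomorphism) are precisely the ones that have to be overcome. But as written your proposal is a plan rather than a proof: the existence of a reducing partial conjugation whenever $\|\phi\|>0$ is asserted, not established, and that is the entire content of the theorem. If you want to turn this into an actual proof you will have to carry out the RAAG normal-form bookkeeping in detail, or simply cite \cite{Laurence2} as the present paper does.
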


As mentioned in the introduction, the above theorem was proved earlier by Humphries \cite{Hum} in the particular case when $A_\Gamma$ is a free group. Observe that a
consequence of Theorem \ref{thm:LS} above is that if $\Gamma$ does not have vertices $v,w$ with $\lk(v) \subset \st(w)$, then $\PAut(A_\Gamma)$ has finite index in $\Aut(A_\Gamma)$, since in this case there are no transvections. 

As mentioned in the introduction, McCool \cite{McCool} gave an explicit presentation of $\PAut(F_n)$. We record (an equivalent form of) this presentation next. Let $v_1,\ldots, v_n$ the standard basis of $F_n$, and denote by $c_{ij}$ the automorphism of $F_n$ given by conjugating $v_i$ by $v_j$. We have: 

\begin{theorem}[McCool \cite{McCool}]
$\PAut(F_n)$ is generated by the set $\{c_{ij} \mid 1\le i,j \le n\}$, and a complete set of relations is: 
\begin{enumerate}
\item[(i)] $[c_{ij}, c_{kl}]=1 $, whenever $\{i,j\} \cap \{k,l\} =\emptyset$ or $j=l$, and
\item[(ii)] $[c_{ij}c_{kj}, c_{ik}]=1$ whenever $i,j,k$ are pairwise distinct. 
\end{enumerate}
\label{thm:mccool}
\end{theorem}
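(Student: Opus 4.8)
The plan is to separate the statement into its routine half --- that the $c_{ij}$ with $i\neq j$ generate $\PAut(F_n)$ and that relations (i) and (ii) hold in it --- and its substantive half, that (i)--(ii) are a \emph{complete} set of relations. For the routine half, observe that since the defining graph of $F_n$ is edgeless we have $\st(v_j)=\{v_j\}$, so each remaining vertex $v_i$ is its own connected component of $\Gamma-\st(v_j)$, and the partial conjugations of $F_n$ are therefore exactly the maps $c_{ij}$ ($i\neq j$), with $c_{ij}(v_i)=v_j^{-1}v_iv_j$ and $c_{ij}$ fixing every other generator; hence Humphries' theorem \cite{Hum} (equivalently, the $F_n$ case of Laurence \cite{Laurence2}) gives that the $c_{ij}$ generate $\PAut(F_n)$. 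Relations (i) and (ii) are then verified by evaluating both sides on $v_1,\dots,v_n$; for (ii) one computes the composites $c_{ij}c_{kj}c_{ik}$ and $c_{ik}c_{ij}c_{kj}$ on $v_i,v_j,v_k$ and checks that they agree. Together these give a surjection $\pi\colon G\twoheadrightarrow\PAut(F_n)$ from the group $G$ with the presentation in the statement, and the whole content of the theorem is the injectivity of $\pi$: every identity among the $c_{ij}$ must follow from (i) and (ii).

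To prove $\pi$ injective I would follow McCool's own approach, namely Whitehead \emph{peak reduction} adapted to stabilizer subgroups of $\Aut(F_n)$. One encodes $\phi\in\Aut(F_n)$ by the tuple of cyclic words $\bigl(\phi(v_1),\dots,\phi(v_n)\bigr)$ and works in the graph whose vertices are the tuples reachable from $(v_1,\dots,v_n)$ by length-bounded automorphisms, with edges labelled by Whitehead automorphisms; the $c_{ij}$ are among these, or short products of them, and an element of $\ker\pi$ is a word in the $c_{ij}^{\pm1}$ that traces a closed edge-loop based at $(v_1,\dots,v_n)$. Using the Higgins--Lyndon form of the peak-reduction lemma, such a loop can be deformed --- staying among loops of the same combinatorial type and using only the identities obtained by composing Whitehead automorphisms --- into a loop containing no ``peak'', and a loop of that kind then collapses to the empty word. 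The main obstacle, and the technical heart of the proof, is the bookkeeping lemma that, once everything is specialised to tuples whose $i$-th entry is a conjugate of $v_i$ and re-expressed in terms of the $c_{ij}$, every elementary peak-removal move is already a consequence of (i) and (ii): this is a finite but delicate case analysis of how partial conjugations compose, and it is essentially the reason the two relation families take the shape they do.

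A more transparent route, which I would prefer for exposition, trades peak reduction for a group action. One takes the contractible complex of McCullough and Miller built from based, $\{1,\dots,n\}$-partitioned trees, on which $\PAut(F_n)$ acts freely --- it is torsion-free and the cell stabilizers in the ambient symmetric automorphism group are finite --- and cocompactly; the quotient is then a finite aspherical complex with fundamental group $\PAut(F_n)$, and reading off a presentation from its $2$-skeleton (van Kampen) yields generators that are conjugates of the $c_{ij}$ and relators that, after Tietze transformations, reduce exactly to those of types (i) and (ii). The nontrivial external input here is the contractibility (or merely simple connectivity) of the McCullough--Miller complex, proved by a folding / deformation-retraction argument; granting that, the identification of the low-dimensional combinatorics of the quotient with McCool's presentation is routine. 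Finally, as a consistency check one may note that for the ``upper-triangular'' McCool subgroups --- where $v_i$ may be conjugated only by $v_j$ with $j>i$ --- the split extensions $1\to F_{n-1}\to\mathrm{PC}_n^{<}\to\mathrm{PC}_{n-1}^{<}\to1$ give the corresponding restricted presentation directly by induction, whereas the absence of any such splitting for the full $\PAut(F_n)$ is one of the reasons a heavier argument is required.
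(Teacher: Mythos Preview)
The paper does not prove this theorem at all: it is stated as a result of McCool, with a citation to \cite{McCool}, and is used as background input for the rest of the paper. There is therefore no ``paper's own proof'' to compare your proposal against.

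That said, your outline is a reasonable summary of the two standard routes in the literature. The first is indeed McCool's original argument via Whitehead peak reduction for stabilizers in $\Aut(F_n)$, and you have correctly identified where the work lies: the case analysis showing that every elementary peak-removal move among partial conjugations is already a consequence of (i) and (ii). The second route, via the McCullough--Miller complex and reading off a presentation from the quotient, is also well known and gives the same answer. Your ``routine half'' (generation by the $c_{ij}$ and verification of (i)--(ii)) is fine as stated. If you were asked to actually \emph{write} a proof rather than sketch one, you would need to carry out either the peak-reduction case analysis or the identification of the $2$-skeleton of the quotient complex in full; as a proposal, what you have written accurately locates the substantive content and the external inputs required.
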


Inspired by the above theorem, and using Day's presentation of $\Aut(A_\Gamma)$ \cite{Day}, Toinet \cite{Toinet} computed an explicit finite presentation $\PAut(A_\Gamma)$. This presentation was subsequently refined by Koban-Piggott \cite{KP}; we state their result here, slightly reformulated to suit our purposes:  


\begin{theorem}[Koban-Piggott \cite{KP}]\label{presA}
$\PAut(A_\Gamma)$ is generated by all partial conjugations $c_{A,v}$, subject to the following relations: 
\begin{itemize}
\item[(i)] $[c_{A,v},c_{B,w}]=1$ if either $v=w$ or $v \in \lk(w)$,

\item[(ii)] $[c_{A,v},c_{B,w}]=1$ if $v\neq w$ and either $(A\cup\{v\})\bigcap(B\cup\{w\}) = \emptyset$ or $(A\cup\{v\})\subseteq B$ or $(B\cup\{w\})\subseteq A$,

\item[(iii)] $[c_{A,v}c_{B,v},c_{A,w}]=1$ whenever $\Gamma-\lk(v)\cap\lk(w)$ has a connected component $A$ which contains neither $v$ nor $w$, and $w\in B$. 
\end{itemize}
 \label{thm:prespaut}
\end{theorem}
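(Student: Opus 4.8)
The plan is to derive this presentation from Toinet's finite presentation of $\PAut(A_\Gamma)$ \cite{Toinet} --- itself obtained from Day's presentation of $\Aut(A_\Gamma)$ \cite{Day} --- by means of Tietze transformations, organising the bookkeeping around the trichotomy of \cite[Lemma 2.1]{DW}.

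\emph{Step 1: the relations hold.} By Laurence's theorem $\PAut(A_\Gamma)$ is generated by the partial conjugations, so there is a surjection $F\twoheadrightarrow\PAut(A_\Gamma)$ from the free group $F$ on the symbols $c_{A,v}$. One first checks that (i)--(iii) genuinely hold in $\PAut(A_\Gamma)$; this is a routine computation with the defining formulas. For (i) and (ii) it suffices to evaluate both sides on each standard generator of $A_\Gamma$: the hypotheses ($v=w$ or $v\in\lk(w)$ in (i); disjointness or nesting of $A\cup\{v\}$ and $B\cup\{w\}$ in (ii)) guarantee that the two conjugations have disjoint supports, or are nested, and hence commute. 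For (iii) one uses that $c_{A,v}c_{B,v}=c_{A\cup B,v}$ whenever $A$ and $B$ are disjoint unions of components of $\Gamma-\st(v)$, and then checks that the commutator with $c_{A,w}$ acts trivially on $A\cup B$ --- this is precisely the content of $A$ being a connected component of $\Gamma-\lk(v)\cap\lk(w)$ that omits both $v$ and $w$, since the boundary of such an $A$ lies in $\lk(v)\cap\lk(w)$. Let $G$ be the group with the stated presentation; the above gives a surjection $\varphi\colon G\twoheadrightarrow\PAut(A_\Gamma)$.

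\emph{Step 2: the relations suffice.} One now shows $\varphi$ is injective by verifying that every relator of Toinet's finite presentation of $\PAut(A_\Gamma)$ --- which, like ours, is presented on the partial conjugations --- lies in the normal closure of (i)--(iii). Fix a pair of vertices $v,w$ occurring in such a relator and run through the possible positions of the components involved relative to $\st(v)$, $\st(w)$, and $\lk(v)\cap\lk(w)$; by \cite[Lemma 2.1]{DW} every component of $\Gamma-\st(v)$ is shared, dominant, or subordinate, which makes this enumeration finite and exhaustive. In the configurations with no shared component each relator collapses either to a trivial word or to an instance of (i) or (ii); a shared component is present exactly when $v,w$ form a SIL, and it is precisely this case that forces relation (iii). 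As a consistency check, in the special case $A_\Gamma=F_n$ one recovers McCool's presentation, Theorem \ref{thm:mccool}. Assembling the cases shows $\ker\varphi=1$.

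\emph{Main obstacle.} The entire weight of the argument is in Step 2: making the case analysis genuinely exhaustive, and ensuring each Toinet relator is accounted for once and only once, with no configuration overlooked. The cleanest way to keep control is to phrase everything in terms of the three component types of \cite[Lemma 2.1]{DW}, to absorb all ``product along a common vertex'' expressions at the outset via $c_{A,v}c_{B,v}=c_{A\cup B,v}$, and then to dispatch each of the finitely many resulting patterns by a short explicit calculation with the partial-conjugation formulas. An alternative, more geometric route would be to build an explicit classifying space for $\PAut(A_\Gamma)$ and read a presentation off its $2$-skeleton, but the Tietze reduction from Toinet's presentation is the most economical path given the results already available.
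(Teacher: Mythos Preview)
The paper does not give its own proof of this statement: Theorem~\ref{thm:prespaut} is quoted as a result of Koban--Piggott \cite{KP}, ``slightly reformulated to suit our purposes'', and is used as a black box throughout. So there is no proof in the paper to compare yours against.

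That said, your plan matches exactly the historical route the paper sketches in the surrounding prose: Toinet \cite{Toinet} extracted a finite presentation of $\PAut(A_\Gamma)$ from Day's presentation of $\Aut(A_\Gamma)$, and Koban--Piggott \cite{KP} then reduced Toinet's relator set to the commutator relations (i)--(iii). Your Step~1 is unproblematic. Your Step~2 is the right strategy but, as written, is only an outline: the phrase ``every relator of Toinet's finite presentation \dots\ lies in the normal closure of (i)--(iii)'' hides all of the work, and Toinet's list contains relators that are not themselves commutators of partial conjugations (e.g.\ relators of length five of the shape $c_{A,v}c_{B,w}c_{A,v}^{-1}c_{A\cup B,w}^{-1}c_{A,w}$ in his notation), so ``collapsing'' them to (i)--(iii) requires explicit manipulations rather than just a case split on component types. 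If you actually intend to carry this out, you will need to list Toinet's relator families and, for each, exhibit the word in conjugates of (i)--(iii) that it equals; the trichotomy of \cite[Lemma~2.1]{DW} organises the bookkeeping but does not by itself perform it.
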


As in \cite{DW}, it is convenient to reformulate these reations using the terminology introduced at the end of Subsection \ref{graphs}. So we have that the partial conjugations $c_{A,v}$ and $c_{B,w}$ commute if and only if either at least one of the connected components $A$ and $B$ is subordinate or both are shared but distinct. 
And case (iii) happens if $A$ is shared and $B$ dominant (and in this case, the pair of vertices $v,w \in V(\Gamma)$ must form a SIL). We will thus refer to relations of type (iii) as {\em SIL-type} relations in $\PAut(A_\Gamma)$.

\begin{remark}
Observe that, in the case of $A_\Gamma=F_n$,  the presentations of $\PAut(F_n)$ given by Theorems \ref{thm:mccool} and \ref{thm:prespaut} coincide. In the case when $A_\Gamma=F_n$, any pair of vertices $v_j$ and $v_k$ form a SIL (for $n\ge 3$) with shared components of the form $\{v_i\}$ for any other $i$. 
\end{remark}

\subsection{Arrow diagrams} Next, we present a useful combinatorial manner of visualizing $\PAut(A_\Gamma)$, in terms of {\em arrow diagrams}, as we now introduce.

Given  a finite simplicial graph $\Lambda$,  as before we denote its  vertex set by $V(\Lambda)$ and its edge set by $E(\Lambda)$.

\begin{definition}
An {\em arrow diagram} is a pair $(\Lambda, \CA)$, where $\Lambda$ is a finite simplicial graph and $\CA$ is a  subset of $V(\Lambda) \times E(\Lambda)$. The elements of $\CA$ are called {\em arrows}. Given an arrow $(v,e) \in \CA$, we say that $v$ is  its {\em initial vertex}. Finally, we call $\Lambda$ the {\em graph underlying the arrow diagram}. 
\end{definition}

The motivation for the name ``arrow diagram" is that we may represent an arrow diagram as a graph together with some arrows, see figure   \ref{fig:arrows}.

\begin{figure}[h]
\leavevmode \SetLabels
\L(.28*.25) $c_{31}$\\
\L(.28*.75) $c_{21}$\\
\L(.44*.25) $c_{12}$\\
\L(.44*.76) $c_{32}$\\
\L(.66*.25) $c_{23}$\\
\L(.66*.75) $c_{13}$\\
\L(.39*.31) $\alpha_1$\\
\L(.39*.54) $\alpha_2$\\
\L(.56*.54) $\alpha_4$\\
\L(.56*.31) $\alpha_3$\\
\L(.45*.04) $\alpha_5$\\
\L(.52*.92) $\alpha_6$\\
\endSetLabels
\begin{center}
\AffixLabels{\centerline{\epsfig{file=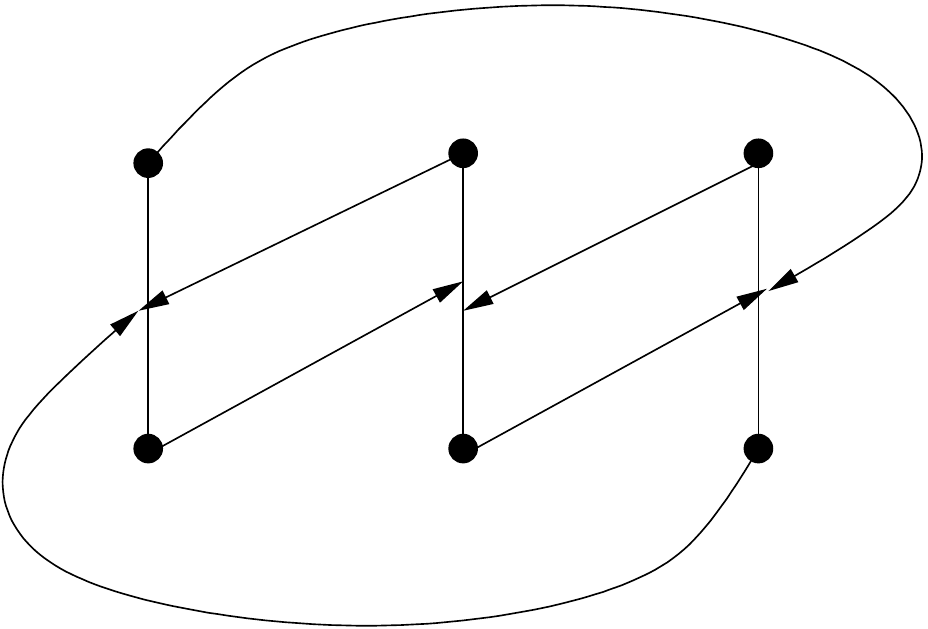,width=6.0cm,height=4cm, angle=0} }}
\vspace{-10pt}
\end{center}
\caption{The graphical representation of an arrow diagram. In fact, the reader may check from Theorem \ref{thm:mccool} that this is the arrow diagram of $\PAut(F_3)$
.} \label{fig:arrows}
\end{figure}

We are now going to define a group from an arrow diagram. We choose to do this is full generality, hoping that the class of groups so defined may be of independent interest. 

\begin{definition}
Let $(\Lambda,\CA)$ be an arrow diagram. The {\em arrow group} $G(\Lambda, \CA)$ defined by $(\Lambda, \CA)$ is the group generated by the vertices of $\Lambda$, subject to the following relations: 
\begin{enumerate}
\item $[v,w] = 1$ if and only if $v \in \lk(w)$.
\item $[u, vw] =1$ if and only if $v, w$ span an edge $e$ and $(u,e)\in \CA$. 
\end{enumerate}
\label{def:arrowgroup}
\end{definition}

Observe that if $\CA = \emptyset$, then $G(\Lambda, \CA)=A_\Lambda$, the right-angled Artin group defined by $\Lambda$. 

In the light of Theorem \ref{thm:prespaut}, the following result is immediate: 

\begin{proposition}
Let $\Gamma$ be a finite simplicial graph. Then $\PAut(A_\Gamma)$ is an arrow group. 
\end{proposition}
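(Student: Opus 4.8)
The plan is to show that $\PAut(A_\Gamma)$ is isomorphic to $G(\Lambda,\CA)$ for a suitable choice of arrow diagram $(\Lambda,\CA)$, by directly matching the Koban--Piggott presentation of Theorem~\ref{thm:prespaut} with the defining presentation of an arrow group in Definition~\ref{def:arrowgroup}. First I would take $\Lambda$ to be the graph whose vertex set is the set of all partial conjugations $c_{A,v}$ of $A_\Gamma$, with an edge joining $c_{A,v}$ and $c_{B,w}$ precisely when they commute by virtue of relation~(i) or~(ii) of Theorem~\ref{thm:prespaut} (equivalently, when $v=w$, or $v\in\lk(w)$, or one of the two associated subsets $A\cup\{v\}$, $B\cup\{w\}$ is contained in the other, or they are disjoint). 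Then I would define the arrow set $\CA$ so that each type-(iii) relation $[c_{A,v}c_{B,v},c_{A,w}]=1$ contributes exactly one arrow: namely, the two generators $c_{A,v}$ and $c_{B,v}$ span an edge of $\Lambda$ (they share the initial vertex $v$, so relation~(i) applies), so this edge $e$ is a genuine edge of $\Lambda$, and we declare $(c_{A,w},e)\in\CA$.

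The next step is to check that with these choices the two presentations are literally the same. The generators agree by construction. For the relations: relation~(1) of Definition~\ref{def:arrowgroup} says two vertices of $\Lambda$ commute iff they are linked, which is exactly how the edges of $\Lambda$ were defined, so it reproduces precisely relations~(i) and~(ii) of Theorem~\ref{thm:prespaut}. Relation~(2) of Definition~\ref{def:arrowgroup} says $[u,vw]=1$ iff $v,w$ span an edge $e$ with $(u,e)\in\CA$; by the definition of $\CA$ this says exactly $[c_{A,w}, c_{A,v}c_{B,v}]=1$ for each SIL-type configuration, i.e. the type-(iii) relations, and no others. Hence the group $G(\Lambda,\CA)$ has the same generators and the same relations as the Koban--Piggott presentation of $\PAut(A_\Gamma)$, so the two groups are isomorphic via the obvious correspondence of generators.

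The one genuinely delicate point, and the step I would be most careful about, is making sure the bookkeeping of arrows is consistent: one must verify that the pair $(c_{A,v}, c_{B,v})$ appearing in a type-(iii) relation really does span an edge of $\Lambda$ (it does, since they have the same initial vertex $v$, so relation~(i) forces them to commute), and that each type-(iii) relation determines the arrow unambiguously, with no clash between the ``$[u,vw]$ iff $(u,e)\in\CA$'' biconditional of Definition~\ref{def:arrowgroup} and the list of commutations already imposed by relations~(i)--(ii). In particular one should confirm that whenever relation~(2) of the arrow group would force a commutation $[u,vw]=1$ coming from an edge $e$ with $(u,e)\in\CA$, this is not already subsumed into, nor in conflict with, relation~(1) in a way that changes the group; since the only edges $e$ carrying arrows are those of the form $\{c_{A,v},c_{B,v}\}$ and the vertex $u=c_{A,w}$ is \emph{not} linked to either endpoint (because $v,w$ form a SIL, so $v\notin\lk(w)$, and the shared/dominant component structure prevents the relevant containments), no such conflict arises. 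Once this is checked the isomorphism $\PAut(A_\Gamma)\cong G(\Lambda,\CA)$ follows immediately from Theorem~\ref{thm:prespaut}, which is why the proposition is stated as being immediate.
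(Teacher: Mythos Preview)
Your proposal is correct and is precisely the argument the paper has in mind: the paper states the proposition as immediate from Theorem~\ref{thm:prespaut}, and you have simply written out the obvious construction of $(\Lambda,\CA)$ and the matching of presentations that makes this immediate. Your extra care about the non-linking of $c_{A,w}$ with $c_{A,v}$ and $c_{B,v}$ is harmless but not strictly needed for the proposition itself (it becomes relevant later for property~(D)).
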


Note that the graph underlying the arrow diagram of $\PAut(A_\Gamma)$ is, in general, not isomorphic to $\Gamma$.

\subsubsection{Dual arrows} Viewed as an arrow group, $\PAut(A_\Gamma)$ has a further interesting property, which plays an important r\^ole in the proof of Theorem \ref{thm:repraag}. More concretely, we say that  an arrow diagram $(\Lambda, \CA)$ has {\em property (D)} if for every arrow $\alpha=(v,e)\in \CA$, there exists a unique arrow $\bar{\alpha}= (\bar{v}, \bar{e})\in \CA$ such that $\bar{v}$ is a vertex of $e$, $v$ is a vertex of $\bar{e}$, and the subgraph of $\Lambda$ spanned by the vertices of $e,\bar{e}$ has no other edges than $e$ and $\bar{e}$. 
In this situation, we say that $\alpha$ and $\bar{\alpha}$ are {\em dual arrows}. For instance, the arrows $\alpha_1$ and $\alpha_2$ of Figure \ref{fig:arrows} are dual to each other.  We deduce: 

\begin{proposition}
Let $\Gamma$ be a finite simplicial graph. Then the arrow diagram of $\PAut(A_\Gamma)$ has property (D).
\label{prop:pconj}
\end{proposition}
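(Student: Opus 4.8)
The goal is to verify that the arrow diagram underlying $\PAut(A_\Gamma)$ (as furnished by Theorem~\ref{thm:prespaut}, via the previous Proposition) satisfies property~(D): every arrow has a unique dual. First I would recall precisely what the arrow diagram of $\PAut(A_\Gamma)$ looks like. By Theorem~\ref{thm:prespaut}, the underlying graph $\Lambda$ has as vertices the partial conjugations $c_{A,v}$, with an edge between $c_{A,v}$ and $c_{B,w}$ exactly when they commute for one of the ``RAAG-type'' reasons in (i)--(ii); the arrows correspond to the SIL-type relations (iii), which have the shape $[c_{A,v}c_{B,v}, c_{A,w}]=1$ where $v,w$ form a SIL, $A$ is a shared component of both $v$ and $w$, and $w\in B$ (so $B$ is the dominant component of $v$ containing $w$). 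Such a relation reads, in the arrow-group language of Definition~\ref{def:arrowgroup}, as an arrow whose initial vertex is $c_{A,w}$ and whose target edge $e$ joins $c_{A,v}$ and $c_{B,v}$. So an arrow of the diagram is a triple, essentially: a shared component $A$, the SIL-pair $\{v,w\}$, and the choice of which of $v,w$ plays the ``initial'' role.

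**Constructing the dual.** Given an arrow $\alpha$ coming from the data $(A; v, w; \text{initial vertex } c_{A,w})$ and target edge $e = \{c_{A,v}, c_{B,v}\}$, I would produce $\bar\alpha$ by swapping the roles of $v$ and $w$: since $A$ is a shared component, it is also a connected component of $\Gamma-\st(w)$, and $v$ lies in the dominant component $B'$ of $w$ (the unique component of $\Gamma-\st(w)$ containing $v$). The SIL-type relation $[c_{A,w}c_{B',w}, c_{A,v}] = 1$ is then also a relation of the presentation, giving an arrow $\bar\alpha$ with initial vertex $c_{A,v}$ and target edge $\bar e = \{c_{A,w}, c_{B',w}\}$. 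One checks the three required incidence conditions: $\bar v = c_{A,v}$ is indeed a vertex of $e$; $v$'s counterpart — the initial vertex $c_{A,w}$ of $\alpha$ — is a vertex of $\bar e$; and finally that the subgraph spanned by the four vertices $c_{A,v}, c_{B,v}, c_{A,w}, c_{B',w}$ has no edges beyond $e$ and $\bar e$. This last point is where I would spend the most care: I must rule out, using (i)--(ii), any commutation among the ``cross'' pairs, e.g.\ between $c_{A,v}$ and $c_{A,w}$, between $c_{B,v}$ and $c_{A,w}$, between $c_{A,v}$ and $c_{B',w}$, and between $c_{B,v}$ and $c_{B',w}$, as well as confirming that $e$ and $\bar e$ really are edges (the relevant containments of type (ii) hold because $A$ is shared, hence disjoint from $\st(v)$ and from $\st(w)$). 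The first two of these non-edges should follow from the SIL hypothesis: $A$ shared forces $v\notin\lk(w)$ and $w\notin\lk(v)$, and the relevant subset-containments of (ii) fail because each of the four vertices names a distinct component picture; the pair $\{c_{B,v}, c_{B',w}\}$ is the subtlest, and I would argue that $B$ (dominant for $v$, containing $w$) and $B'$ (dominant for $w$, containing $v$) cannot be nested and their initial vertices are distinct non-adjacent vertices, so none of the clauses of (i)--(ii) applies.

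**Uniqueness.** For uniqueness of the dual, suppose $\alpha'=(\bar v, \bar e')$ is another arrow with $\bar v$ a vertex of $e$, with the initial vertex of $\alpha$ a vertex of $\bar e'$, and with no extra edges in the spanned subgraph. Every arrow of the diagram is a SIL-type relation, so $\alpha'$ is determined by a shared component $A'$ and an ordered SIL-pair; the condition that its initial vertex $\bar v \in \{c_{A,v}, c_{B,v}\}$ together with the structural constraints (an arrow's initial vertex is a $c_{(\text{shared}),\cdot}$, and $c_{B,v}$ — being dominant — is not of that form, provided we record in the diagram enough of the component type; here I would invoke the Davis--Wade dichotomy from Subsection~\ref{graphs} that every component of $\Gamma-\st(v)$ is shared, dominant, or subordinate) forces $\bar v = c_{A,v}$. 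Then the requirement that the initial vertex $c_{A,w}$ of $\alpha$ be a vertex of $\bar e'$ pins down $\bar e'$ to join $c_{A,w}$ with the dominant component of $w$, i.e.\ $\bar e' = \bar e$ and $\alpha'=\bar\alpha$. Finally, I would note that $\bar{\bar\alpha}=\alpha$, so duality is an involution, which is consistent with ``unique dual.''

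**Main obstacle.** The genuinely delicate step is the ``no other edges'' clause: I have to show that among the (up to) four vertices $c_{A,v}, c_{B,v}, c_{A,w}, c_{B',w}$, the only commutations forced by relations (i)--(ii) of Theorem~\ref{thm:prespaut} are the two edges $e$ and $\bar e$ themselves. This is a finite case check, but it requires knowing precisely which of $A, B, B'$ contain which vertices and applying the definition of SIL carefully — in particular using that $A$ is \emph{disjoint} from $\st(v)\cup\st(w)$, that $B\ne B'$, and that $B, B'$ are not nested — to kill each of the clauses ``$v\in\lk(w)$'', ``$(A'\cup\{v'\})\cap(B'\cup\{w'\})=\emptyset$'', and the two nesting clauses. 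I expect this to be the crux of the write-up; everything else is bookkeeping in the Koban--Piggott presentation combined with the component trichotomy recalled in Subsection~\ref{graphs}.
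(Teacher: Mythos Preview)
Your approach is essentially the same as the paper's: construct the dual of the arrow coming from $[c_{A,v}c_{B,v},c_{A,w}]=1$ by swapping the roles of $v$ and $w$, taking the dominant component $B'$ of $\Gamma-\st(w)$ containing $v$, and using the SIL-type relation $[c_{A,w}c_{B',w},c_{A,v}]=1$. The paper's own proof is considerably terser than your plan --- it simply asserts both the ``no other edges'' clause and (implicitly) uniqueness without the case analysis you outline --- so your more careful treatment of those two points is an improvement rather than a deviation.
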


\begin{proof}
Consider an arrow $(v,e) \in \CA$ in $\PAut(A_\Gamma)$. By Theorem \ref{thm:prespaut}, it corresponds to a SIL-type relation \[[c_{A,v}c_{B,v},c_{A,w}]=1,\] for $v,w$ a SIL-pair, where $A$ is a shared component and $B$ is dominant thus  $w \in B$. Let $C$ be the other dominant component, i.e., the  connected component of $\Gamma - \lk(w)$ that contains $v$. Then the partial conjugation $c_{C,w}$ is well-defined and we have \[[c_{A,w}c_{C,w}, c_{A,v}] =1,\] as desired. Moreover, the only edges between $c_{A,w}$, $c_{C,w}$, $c_{A,v}$ and $c_{B,v}$ are the edges corresponding to $[c_{A,w},c_{C,w}]=1$ and $[c_{A,v},c_{B,w}]=1$.
Thus property (D) holds. 
\end{proof}

\begin{corollary} Let $\Gamma$ be a finite simplicial graph. Then the arrow diagram of $\PAut(A_\Gamma)$ has an even number of arrows. 
\label{cor:even}
\end{corollary}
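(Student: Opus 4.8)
The plan is to deduce Corollary \ref{cor:even} as an immediate consequence of Proposition \ref{prop:pconj}. By that proposition, the arrow diagram of $\PAut(A_\Gamma)$ has property (D): every arrow $\alpha \in \CA$ has a \emph{unique} dual arrow $\bar{\alpha} \in \CA$. The key observation is that duality is an involution on the set $\CA$ of arrows, that is, $\bar{\bar{\alpha}} = \alpha$ for every $\alpha$.

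First I would verify this involutivity. Given $\alpha = (v,e) \in \CA$ with dual $\bar{\alpha} = (\bar v, \bar e)$, the defining conditions of property (D) are symmetric in $\alpha$ and $\bar\alpha$: $\bar v$ is a vertex of $e$, $v$ is a vertex of $\bar e$, and the subgraph of $\Lambda$ spanned by the vertices of $e$ and $\bar e$ has no edges other than $e$ and $\bar e$. Reading these same three conditions with the roles of $\alpha$ and $\bar\alpha$ interchanged shows that $\alpha$ itself is \emph{a} dual arrow of $\bar\alpha$; by the uniqueness clause in property (D), it is \emph{the} dual arrow of $\bar\alpha$, so $\bar{\bar\alpha} = \alpha$. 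One should also note that $\alpha \neq \bar\alpha$: their initial vertices are $v$ and $\bar v$ respectively, and these are distinct since they span an edge $e$ (the graph is simplicial, so it has no loops); alternatively, in the explicit description from the proof of Proposition \ref{prop:pconj}, $\alpha$ corresponds to the pair $(c_{A,v}c_{B,v}, c_{A,w})$ while $\bar\alpha$ corresponds to $(c_{A,w}c_{C,w}, c_{A,v})$, with initial vertices $c_{A,v} \neq c_{A,w}$.

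Once involutivity and fixed-point-freeness are in hand, the conclusion is the standard counting fact: $\alpha \mapsto \bar\alpha$ partitions $\CA$ into two-element orbits $\{\alpha, \bar\alpha\}$, hence $|\CA|$ is even.

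The argument is entirely formal, so there is no real obstacle; the only point requiring care is the justification that duality is well-defined as an involution, i.e.\ checking the symmetry of the conditions in Definition of property (D) and invoking uniqueness to rule out the degenerate possibility that some arrow could be its own dual or that $\bar\alpha$ might have a dual different from $\alpha$. I would present this as a two- or three-sentence proof.

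\begin{proof}
By Proposition \ref{prop:pconj}, the arrow diagram of $\PAut(A_\Gamma)$ has property (D). The three conditions defining the dual of an arrow are symmetric: if $\bar\alpha = (\bar v, \bar e)$ is the dual of $\alpha = (v,e)$, then $v$ is a vertex of $\bar e$, $\bar v$ is a vertex of $e$, and the subgraph spanned by the vertices of $e$ and $\bar e$ has no edges other than $e$ and $\bar e$; these are exactly the conditions saying that $\alpha$ is a dual of $\bar\alpha$, so by the uniqueness clause in property (D) we get $\bar{\bar\alpha} = \alpha$. Moreover $\alpha \neq \bar\alpha$, since their initial vertices $v$ and $\bar v$ are distinct (they are the two endpoints of the edge $e$, and $\Lambda$ is simplicial). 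Hence $\alpha \mapsto \bar\alpha$ is a fixed-point-free involution on the set $\CA$ of arrows, so $\CA$ is partitioned into pairs $\{\alpha, \bar\alpha\}$ and $|\CA|$ is even.
\end{proof}
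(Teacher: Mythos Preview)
Your approach is exactly the (implicit) argument the paper has in mind: Proposition~\ref{prop:pconj} furnishes a fixed-point-free involution $\alpha\mapsto\bar\alpha$ on $\CA$, so $|\CA|$ is even. Your verification of involutivity via the symmetry of the three conditions in property~(D) is correct.

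There is, however, a slip in your justification that $\alpha\neq\bar\alpha$. You assert that $v$ and $\bar v$ ``are the two endpoints of the edge $e$''. This is not so: by definition $\bar v$ is a vertex of $e$ and $v$ is a vertex of $\bar e$, but $v$ is the \emph{initial} vertex of $\alpha=(v,e)$ and need not lie on $e$ at all. Indeed, in the arrow diagram of $\PAut(A_\Gamma)$ it never does: the arrow coming from the relation $[c_{A,v}c_{B,v},c_{A,w}]=1$ has initial vertex $c_{A,w}$ and edge $e(c_{A,v},c_{B,v})$, and $c_{A,w}$ is neither endpoint since $v\neq w$. The correct argument for fixed-point-freeness is: if $\alpha=\bar\alpha$ then $v=\bar v$ and $e=\bar e$, whence (by condition~(a) of property~(D)) $v$ would be a vertex of $e$; but, as just observed, this does not occur for the arrows of $\PAut(A_\Gamma)$. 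With this correction your proof goes through.
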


\begin{remark}
 Theorem \ref{thm:mccool} implies that the number of arrow pairs in the arrow diagram for $\PAut(F_n)$ is equal to \[ \frac{n(n-1)(n-2)}{2}.\] Indeed, whenever $\{i,j,k\}$ are pairwise distinct, there is an arrow $(c_{ik},e(c_{ij},c_{kj}))$, where $e(c_{ij},c_{kj})$ is the edge between $c_{ij}$ and $c_{kj}$. This arrow has dual arrow $(c_{ij},e(c_{jk},c_{ik}))$. Therefore, for each choice of $i\neq j$ we have $(n-2)/2$ pairs.
\end{remark}

\section{Proof of Theorem \ref{thm:repraag}}
\label{sec:raags}

Before proceeding to proving Theorem \ref{thm:repraag}, we need a final piece of notation. Let  $\Gamma$ be a finite simplicial graph and $(\Lambda, \CA)$ the arrow diagram of $\PAut(A_\Gamma)$. We say that a vertex $w\in V(\Gamma)$ is {\em not involved in any arrows} if 
the following two conditions hold
\begin{itemize}
\item[i)] there is no arrow of the form $(w,e)$,

\item[ii)] there is no arrow of the form $(v,e)$ where $w$ is a vertex of $e$.
\end{itemize}
\begin{proof}[Proof of Theorem \ref{thm:repraag}]
As above, denote by $(\Lambda, \CA)$ the arrow diagram of $\PAut(A_\Gamma)$. In the light of Corollary \ref{cor:even}, we have that $|\CA| = 2N$, for some $N\in \N$. We choose an indexing $$\CA = \{\alpha_1, \ldots, \alpha_{2N}\}$$ so that 
$\alpha_{2i} = \bar{\alpha}_{2i-1}$ for all $i = 1, \ldots, N$. 
Now, let $\Delta$ be the subgraph of $\Gamma$ spanned by those vertices of $\Lambda$ that are not involved in any arrows, and
 $A_\Delta$ the associated RAAG. Choose a free basis $a,b$ of $F_2$, and write $G$ for the direct product of $N$ copies of $F_2 \times F_2$. 
We define a map \[\rho: V(\Lambda) \to A_\Delta \times G\] as follows:

\begin{enumerate}
\item If $v\in V(\Lambda)$ is not involved in any arrows, then we set $\rho(v) = (v,1)$. 
\item Otherwise, we set $\rho(v) = (1, g)$, where $g=(g_1, \ldots, g_N)$, with the $i$-th entry $g_i\in F_2\times F_2$ being equal to $(1,1)$ unless: 
\begin{enumerate}
\item Suppose $v$ is the initial vertex of the arrow  $\alpha_{2i-1}$ (resp. $\alpha_{2i}=\bar{\alpha}_{2i-1}$). In this case, set $g_i=(a,1)$ (resp. $g_i=(b,1)$). 

\item Suppose $v \in \lk(w)$, where $w$ is the initial vertex of the arrow $\alpha_{2i-1}$, and the edge $e=\{v,w\}$ is the edge of the dual arrow $\bar{\alpha}_{2i-1}$
(resp.  $w$ is the initial vertex of the arrow $\alpha_{2i}=\bar{\alpha}_{2i-1}$, and the edge $e=\{v,w\}$ is the edge of the dual arrow $\bar{\alpha}_{2i}={\alpha}_{2i-1}$). In this case, we set $g_i=(a^{-1},a)$ (resp. $g_i=(b^{-1}, b)$). 
\end{enumerate}
\end{enumerate}

\medskip
\begin{center}
 \begin{tikzpicture}[scale=0.6]
 \draw[black]
    (0,0) -- (0,2) 
    (3,0) -- (3,2);
 \draw[->,black]    
    (0,0) -- node[right,below,sloped]{\tiny{$\alpha_{2i}$}}(3,1);
  \draw[->,black]    
     (3,2) -- node[left,above,sloped]{\tiny{$\alpha_{2i-1}$}} (0,1);
      \filldraw (0,0) circle (2pt) node[left=4pt]{$(b,1)$};
        \filldraw (3,0) circle (2pt) node[right=4pt]{$(a^{-1},a)$};
 \filldraw (0,2) circle (2pt) node[left=4pt]{$(b^{-1},b)$};
 \filldraw (3,2) circle (2pt) node[right=4pt]{$(a,1)$};
\end{tikzpicture}
\end{center}
\medskip

   We claim that the map $\rho$ so constructed defines a homomorphism \[\rho: G(\Lambda,\CA)= \PAut(A_\Gamma) \to A_\Delta \times G,\] using a slight abuse of notation. In order to do so, we need to verify that $\rho$ preserves the relations of $\PAut(A_\Gamma)$ given in Theorem \ref{thm:prespaut}. We prove this in two separate claims:


\medskip

\noindent{\bf Claim 1.} Let $v,w\in V(\Delta)$ be linked vertices. Then  \[[v,w] =1\implies [\rho(v) , \rho(w)]=1.\] 

\begin{proof}[Proof of Claim 1.]
 First, suppose $v$ is not involved in any arrows, in which case $\rho(v)= (v,1)$. If $w$ is not involved in any arrows either, then $\rho(w) = (w,1)$. Otherwise, $\rho(w)=(1,g)$ for some $g\in G$. In either case, $[\rho(v) , \rho(w)]=1$ and we are done.  
 
Therefore, we have to consider the case when $v$ and $w$ are both involved in some pair of arrows, where by the above it suffices to assume that they are dual to each other.  Property (D) implies that, up to interchanging $v$ and $w$, we may suppose that $v$ is the initial vertex of $\alpha_{2i-1}$ or  $\alpha_{2i}=\bar{\alpha}_{2i-1}$. Assume, for the sake of concreteness, that we are in the former case, as the other one is totally analogous. In this situation, we have that $\rho(v) = (1, g)$ and $\rho(w) = (1,g')$, where the $i$-th entry of $g$ (resp. $g'$) is $(a,1)$ (resp. $(a^{-1},a)$). In particular, $[\rho(v) , \rho(w)]=1$, as desired. 
\end{proof}

\medskip

\noindent{\bf Claim 2.} For vertices $v,w\in V(\Delta)$, \[[u, vw] =1 \implies [\rho(u), \rho(v)\rho(w)] =1.\] 

\begin{proof}[Proof of Claim 2.]
In this case we know that $v$ and $w$ span an edge $e\in E(\Lambda)$ with $(u,e) \in \CA$, so $ (u,e) = \alpha_{2i-1}$ or $\bar{\alpha}_{2i-1}$ for some $i$. Suppose, again for concreteness, that we are in the former case, so that $u$ is the initial vertex of $\alpha_{2i-1}$. Up to interchanging $v$ and $w$, we may assume that $v$ is the initial vertex of the arrow $\bar{\alpha}_{2i-1}$. In particular, we write $\bar{\alpha}_{2i-1}= (v, \bar{e})$, where $\bar{e}\in E(\Lambda)$ with $u \in \bar e$. Let $z$ be the vertex of $\bar e$ which is not equal to $u$. 
From the definition of $\rho$, we have: 
$$
\begin{aligned}
\rho(u)&=(1,g_u)\\
\rho(v)&=(1,g_v)\\
\rho(w)&=(1,g_w)\\
\rho(z)&=(1,g_z),\\
\end{aligned}
$$
for some $g_u,g_v,g_w,g_z \in G$ whose $i$-th entries are, respectively, equal to $(a,1)$, $(b,1)$, $(b^{-1},b)$, and $(a^{-1},a)$. In particular we see that, when restricted to the $i$-th component of $\rho$, the relation $[\rho(u), \rho(v)\rho(w)] =1$ is preserved. Since the components of $\rho$ correspond precisely to pairs of dual arrows, we obtain  the result. 
This finishes the proof of Claim 2. 
\end{proof}

In the light of Claims 1 and 2, we know that $\rho$ defines a homomorphism that we also denote $\rho$
 \[\rho: \PAut(A_\Gamma) \to  A_{\Delta}\times G,\] where $G$ is the direct product of $N$ copies of $F_2 \times F_2$. It remains to check that the composition $\rho_i$ of $\rho$ with the projection with the $i$-th direct factor of $G$ is surjective, for every $i=1,\ldots, N$. To this end, 
consider the arrows $\alpha_{2i-1}$ and $\alpha_{2i}=\bar{\alpha}_{2i-1}$. By definition, $\alpha_{2i-1}$ corresponds to a relation of the form $[u,vw]=1$; thus, up to renaming $v$ and $w$, $\bar{\alpha}_{2i-1}$  corresponds to the relation $[w,uz]=1$. Hence, in the $i$-th copy of $F_2 \times F_2$, the entries of $\rho(u)$ and $\rho(w)$ are $(a,1)$ and $(b,1)$, and therefore they generate a free group on two generators. Similarly, the entries $\rho(v)$ and $\rho(z)$ are $(b^{-1},b)$ and $(a^{-1}, a)$, respectively. In particular we see that, in this copy of $F_2 \times F_2$, the group generated by $\rho(u)$, $\rho(v)$, $\rho(w)$, and $\rho(z)$ is isomorphic to $F_2 \times F_2$, as desired.
\end{proof}

\begin{example} In the particular case of $\PAut(F_3)$, it is easy to give an explicit description  of the  generating set under the group homomorphism $\rho$ above. This standard generating set has six elements, and is represented by the arrow diagram of  figure \ref{fig:arrows}. In the rest of the example, we will refer to the labelling of that figure. 
 Then one  checks that the images of the vertices of $\Lambda$ under $\rho$ are
$$
\begin{aligned}
\rho(c_{21})&=(a^{-1},a,1,1,a,1)\\
\rho(c_{31})&=(a,1,1,1,a^{-1},a)\\
\rho(c_{32})&=(b,1,a^{-1},a,1,1)\\
\rho(c_{12})&=(b^{-1},b,a,1,1,1)\\
\rho(c_{13})&=(1,1,b,1,b^{-1},b)\\
\rho(c_{23})&=(1,1,b^{-1},b,b,1).\\
\end{aligned}
$$
Observe that $\rho([c_{31},c_{32}])=([a,b],1,1,1,1,1)$ and this element commutes with $\rho(c_{13})$. Therefore
$$\rho([[c_{31},c_{32}],c_{13}])=1$$
so $\rho$ is not injective.
\end{example}

A souped-up version of this example gives:

\begin{proposition}
The homomorphism \[\rho: \PAut(F_n) \to \prod_{i=1}^N F_2 \times F_2\] constructed above is not injective for $n\geq 3$.
\label{lem:nonfaithful}
\end{proposition}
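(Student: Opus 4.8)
The statement asserts non-injectivity of $\rho$ for $\PAut(F_n)$ when $n\ge 3$. The $n=3$ case is already settled by the explicit computation in the preceding example, where $\rho([[c_{31},c_{32}],c_{13}])=1$ while the commutator is non-trivial in $\PAut(F_3)$ (this last fact following, say, from McCool's presentation or from the fact that $\PAut(F_3)$ is not two-step nilpotent). So the plan is to reduce the general case to $n=3$.

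\emph{Step 1: Produce a retraction onto $\PAut(F_3)$.} First I would exhibit, for every $n\ge 3$, a homomorphism $\pi:\PAut(F_n)\to\PAut(F_3)$ which is a left inverse (up to the relevant identifications) of a natural inclusion $\iota:\PAut(F_3)\hookrightarrow\PAut(F_n)$. Concretely, using McCool's presentation (Theorem \ref{thm:mccool}), send $c_{ij}\mapsto c_{ij}$ whenever $i,j\in\{1,2,3\}$ and $c_{ij}\mapsto 1$ otherwise. One checks this respects all relations of type (i) and (ii): a relation among generators with indices in $\{1,2,3\}$ maps to the corresponding relation in $\PAut(F_3)$, and any relation involving a generator with an index outside $\{1,2,3\}$ becomes a trivial relation (the key point for type (ii) relations $[c_{ij}c_{kj},c_{ik}]=1$ is that if any one of $i,j,k$ lies outside $\{1,2,3\}$ then at least two of the three generators $c_{ij},c_{kj},c_{ik}$ are killed, since each index appears in two of them). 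Hence $\pi$ is well-defined, and clearly $\pi\circ\iota=\mathrm{id}$ where $\iota$ includes $\PAut(F_3)$ as the subgroup generated by $\{c_{ij}\mid i,j\in\{1,2,3\}\}$.

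\emph{Step 2: Transport the $n=3$ counterexample.} Let $w=[[c_{31},c_{32}],c_{13}]\in\PAut(F_3)\subseteq\PAut(F_n)$. Since $\pi(w)=w\ne 1$ in $\PAut(F_3)$, we have $w\ne 1$ in $\PAut(F_n)$. It remains to show $\rho(w)=1$ in $\prod_{i=1}^N F_2\times F_2$. Here I would argue componentwise: for each $i\in\{1,\dots,N\}$ the $i$-th component $\rho_i$ of $\rho$ is determined by a pair of dual arrows $\alpha_{2i-1},\alpha_{2i}$, and by construction $\rho_i(c_{kl})$ is non-trivial only for the (at most four) generators directly involved in that arrow pair. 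So $\rho_i(w)$ is a commutator expression that is trivial unless all three of $c_{31},c_{32},c_{13}$ are involved in the $i$-th arrow pair. Examining the arrow diagram of $\PAut(F_n)$ (Theorem \ref{thm:prespaut} / the remark giving its arrows), a single SIL-relation $[c_{A,v}c_{B,v},c_{A,w}]=1$ in the free-group case has the form $[c_{ij}c_{kj},c_{ik}]=1$, so its four involved generators are $c_{ij},c_{kj},c_{ik}$ together with the corresponding ones of the dual arrow — in any case the three generators appearing all share a common third index or are otherwise constrained; one checks directly that $\{c_{31},c_{32},c_{13}\}$ is never contained in the generator set of a single arrow pair. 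Therefore $\rho_i(w)=1$ for every $i$, hence $\rho(w)=1$.

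\emph{Main obstacle.} The genuine content is Step 2: verifying that no single pair of dual arrows simultaneously involves all three generators $c_{31},c_{32},c_{13}$, so that each component $\rho_i$ annihilates $w$. This is a finite combinatorial check on the arrow diagram of $\PAut(F_n)$, but it must be done carefully, tracking exactly which four vertices of $\Lambda$ receive non-trivial labels in the $i$-th $F_2\times F_2$ factor (the initial vertices of $\alpha_{2i-1},\alpha_{2i}$ and the two link-vertices appearing in case (2)(b) of the construction). An alternative, perhaps cleaner, route for Step 2 is to observe that $\rho$ factors through a group in which $[[c_{31},c_{32}],c_{13}]$ is already trivial for structural reasons — e.g. each $\rho_i$ has image in $F_2\times F_2$ and the three elements $\rho_i(c_{31}),\rho_i(c_{32}),\rho_i(c_{13})$ always lie in a common abelian or metabelian-of-low-class subgroup of that $F_2\times F_2$ — but I expect the direct componentwise check to be the most transparent.
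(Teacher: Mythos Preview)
Your proposal is correct and follows essentially the same route as the paper: both arguments kill the double commutator $[[c_{31},c_{32}],c_{13}]$ (or $[[c_{ij},c_{ik}],c_{ji}]$ in general) by a componentwise check, observing that on every arrow pair at least one of the three generators has trivial image in the corresponding $F_2\times F_2$ factor. The paper organizes the check slightly differently---it first lists exactly which arrow pairs support $\rho(c_{ij})$ and $\rho(c_{ik})$, finds they overlap in a single pair, and then notes $\rho(c_{ji})$ vanishes there---but this is the same combinatorics you describe in Step~2, and your identification of the four vertices $\{c_{ik},c_{ij},c_{kj},c_{jk}\}$ attached to each arrow pair makes the verification immediate. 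Your Step~1 (the retraction $\PAut(F_n)\to\PAut(F_3)$) is a welcome addition: the paper does not explicitly justify that the double commutator is non-trivial in $\PAut(F_n)$, so your retraction cleanly fills that small gap.
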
 
\begin{proof} We claim that for any $i,j,k$ pairwise distinct
$$\rho([[c_{ij},c_{ik}],c_{ji}])=1.$$
Consider first $\rho(c_{ij})$. Its component on each pair of dual arrows is trivial except in the following cases
\begin{itemize}
\item[(1)$_s$] for $1\leq s\leq n$, $s\neq i,j$: arrow from $c_{sj}$ with dual from $c_{si}$,

\item[(2)$_s$] for $1\leq s\leq n$, $s\neq i,j$: arrow from $c_{ij}$ with dual from $c_{is}$.
\end{itemize}
Now, doing the same for $\rho(c_{ik})$ one sees that for each pair of arrows, one of $\rho(c_{ij})$ or $\rho(c_{i,k})$ is trivial, except in the case (2)$_s$ above for $s=k$. Therefore $\rho([c_{ij},c_{ik}])$ is trivial everywhere except for that precise pair of arrows. As $\rho(c_{ji})$ is trivial on that pair we get the claim.
\end{proof}

\begin{remark}In fact there is no hope to be able to represent $\PAut(F_n)$ faithfully as a subdirect product of a product of free groups of any rank. Indeed, since $\PAut(F_n)$ is of cohomological type $\mathrm{FP}_\infty$ \cite{Brady},  a result of Bridson-Howie-Miller-Short \cite{BHMS} implies that  $\PAut(F_n)$ would be virtually a product of free groups, which is not the case. 
\end{remark}

\section{Normal RAAG subgroups of $\PAut(A_\Gamma)$}
\label{sec:conch}


In this section, we will prove Theorem \ref{thm:main2}.
As stated in the introduction, the map of Theorem \ref{thm:repraag} gives a linear representation of $\PAut(A_\Gamma)$ thus we have a linear quotient of this group. In Theorem \ref{thm:main2} we identify a linear normal subgroup $A_{\hat \Gamma}$ of $\PAut(A_\Gamma)$; more precisely a normal subgroup of $\PAut(A_\Gamma)$ which its itself a RAAG.   
Observe that $\PAut(A_\Gamma)$ contains the subgroup $\Inn(A_\Gamma)\cong A_\Gamma/Z(A_\Gamma)$ of inner automorphisms as a normal subgroup. Moreover, $\Inn(A_\Gamma)$ is well known to be a RAAG, namely the RAAG associated to the graph $\Gamma-\Lambda$ where $\Lambda$ is the set of vertices of $\Gamma$ which are linked to every  vertex other than themselves.
There are other ways to find canonical normal subgroups of $\PAut(A_\Gamma)$ which are normal and RAAGs (see \cite{ChCV1} and Remark \ref{rem:CV} below) but our subgroup $A_{\hat \Gamma}$ is the biggest possible in a certain sense. For example, $\PAut(A_\Gamma)$ is a RAAG itself if and only if  there is no SIL \cite{KP} and this happens if and only if $A=\PAut(A_\Gamma)$.
 
As we will see, the RAAG $A_{\hat \Gamma}$ will be constructed explicitly via a graph $\hat\Gamma$ built from $\Gamma$. 
Before explaining all this, we need some preliminary work.
First, we give the following local characterization of a graph containing no SILs. Given $n\ge 2$ by an {\em $n$-claw} we mean the complete bipartite graph $K_{1,n}$. Given an $n$-claw, its unique vertex of valence $n$ is called the {\em center}, and the other vertices are called the {\em leaves}. We have:


%
%

\
\begin{proposition}\label{SILstar} Let $\Gamma$ be a connected graph. The following conditions are equivalent: 
\begin{enumerate}
\item For each  3-claw embedded as full subgraph, there is a labelling $v_1$,$v_2$,$v_3$ of the leaves so that $v_2$ and $v_3$ are connected in $\Gamma-\st(v_1)$ and  $v_1$ and $v_2$ are connected in $\Gamma-\st(v_3)$.
\item $\Gamma$ has no SIL. 
\end{enumerate}
\end{proposition}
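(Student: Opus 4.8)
The equivalence is between a ``local'' condition on $3$-claws and the ``global'' non-existence of a SIL. The implication $(2)\Rightarrow(1)$ is the easy direction: suppose $v_1,v_2,v_3$ are the leaves of a full $3$-claw with center $c$. Since $\Gamma$ has no SIL, for each pair $v_i,v_j$ of leaves we know $v_i\notin\lk(v_j)$ (they are non-adjacent, being leaves of the same claw), hence $\Gamma\setminus(\lk(v_i)\cap\lk(v_j))$ has \emph{no} component avoiding both $v_i$ and $v_j$. I would translate this, via the dominant/subordinate/shared trichotomy recalled at the end of Subsection \ref{graphs}, into: for non-linked $v_i,v_j$, no component of $\Gamma-\st(v_i)$ is shared, so every component of $\Gamma-\st(v_i)$ is either the dominant component of $v_i$ (the one containing $v_j$) or subordinate to $v_j$. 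Running this for the three pairs among $\{v_1,v_2,v_3\}$, I want to extract the cyclic-type conclusion: for a suitable labelling, $v_2,v_3$ lie in the same component of $\Gamma-\st(v_1)$ and $v_1,v_2$ lie in the same component of $\Gamma-\st(v_3)$. The point is that ``no shared component'' forces each $v_j$ to sit in the dominant component of $v_i$, i.e.\ to be connected to $v_i$ after removing $\st$ of the third vertex --- I would make this precise by noting that being in a shared component would immediately produce a SIL, and conclude by a short case analysis on which leaf plays which r\^ole.

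\textbf{The hard direction $(1)\Rightarrow(2)$.} Here I argue by contrapositive: assume $\Gamma$ has a SIL and produce a full $3$-claw violating (1). So fix non-linked vertices $v,w$ with a shared component $Y$ of $\Gamma-\st(v)=\Gamma-\st(w)$; pick $y\in Y$. The natural candidate for the claw is to find a common neighbour. The obstacle is that $v,w,y$ need not have a single common neighbour, and even if they do, the induced subgraph on $\{v,w,y,\text{center}\}$ might not be a \emph{full} $3$-claw (there could be extra edges, e.g.\ $v$ adjacent to $y$, or $y$ adjacent to some neighbour of $v$). I would handle this as follows. First, since $\Gamma$ is connected and $Y$ is a full component of $\Gamma-\st(v)$, there is a vertex $a\in\st(v)$ adjacent to some vertex of $Y$; similarly using $\st(w)$. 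I then want to locate a vertex $c$ that is a common neighbour of $v$, $w$, and of some vertex $y'\in Y$, with no further adjacencies among the four --- adjusting $y$ along $Y$ and choosing $c$ carefully (for instance, taking a shortest path inside $Y\cup\{c\}$, or passing to $\lk(v)\cap\lk(w)$, which is nonempty because $Y$ being a component of $\Gamma-(\lk(v)\cap\lk(w))$ separates $v$ and $w$ from $Y$ forces common neighbours). Once I have a full $3$-claw with leaves $v,w,y'$ and center $c$, I must check that \emph{no} labelling satisfies condition (1): whichever leaf we call $v_1$, the shared component $Y$ obstructs connectivity in $\Gamma-\st(v_1)$ of the required pair, because $Y$ being shared means it is a component of $\Gamma-\st$ of \emph{every} one of $v,w$ (and $y'$ lies in it), so the two leaves we need to connect after deleting $\st(v_1)$ are separated by exactly this component.

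\textbf{Where the real work is.} The genuine obstacle is the combinatorial bookkeeping in $(1)\Rightarrow(2)$: manufacturing an \emph{induced} (full) $3$-claw out of a raw SIL, and then verifying the failure of (1) for all three labellings simultaneously. I expect this to require a careful choice of the three leaves within the shared component and of the center within $\lk(v)\cap\lk(w)$, together with a ``minimality'' choice (shortest path / closest vertex) to kill off spurious edges, exactly the kind of argument used in \cite{DW} to normalize SIL configurations. The converse direction, by contrast, is essentially a direct unwinding of the dominant/subordinate/shared dictionary and should be short. I would also remark that connectedness of $\Gamma$ is used precisely to guarantee the existence of the center vertex (common neighbour); without it the statement can fail, which is why the hypothesis is imposed.
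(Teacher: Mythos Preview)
Your overall strategy coincides with the paper's: for $(1)\Rightarrow(2)$ you build a full $3$-claw out of a SIL and show it violates~(i), and for $(2)\Rightarrow(1)$ you use the dominant/subordinate/shared trichotomy (the paper does this direction by contrapositive, but the content is identical). Two places deserve tightening.

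\textbf{The claw construction is easier than you fear.} Pick any vertex $c\in\lk(v)$ adjacent to some $y'\in Y$; such a $c$ exists by connectedness of $\Gamma$, since every neighbour of $Y$ outside $Y$ lies in $\st(v)$. Because $Y$ is also a component of $\Gamma-\st(w)$, the same reasoning forces $c\in\lk(w)$. Fullness of the claw on $\{v,w,y',c\}$ is then automatic: $v,w$ are non-adjacent by the SIL hypothesis, and $y'\in Y\subseteq\Gamma-\st(v)$ and $y'\in Y\subseteq\Gamma-\st(w)$ give $y'\notin\lk(v)\cup\lk(w)$. No minimality or shortest-path trick is needed; the ``spurious edges'' you worry about cannot occur.

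\textbf{Your ``all labellings fail'' check has a genuine gap.} You assert that removing $\st(v_1)$ always separates the other two leaves via $Y$, but this is false when the leaf labelled $v_1$ is $y'$: then $Y$ is certainly not a component of $\Gamma-\st(y')$ (it contains $y'$), and $v,w$ may well be connected there. The fix is to observe that condition~(i) demands connectivity after removing the stars of \emph{two} of the three leaves (those labelled $v_1$ and $v_3$); at least one of these two is $v$ or $w$, and removing that star places $y'$ in the component $Y$ while the remaining SIL vertex lies outside $Y$. Equivalently: condition~(i) for a claw holds iff at least two of the three statements ``$v_i,v_j$ are connected in $\Gamma-\st(v_k)$'' are true, and for your claw the two statements with $k$ corresponding to $v$ or to $w$ both fail. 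This is exactly how the paper disposes of the case.
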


\begin{proof} We first prove (i) $\implies$ (ii). Assume, for contradiction, that $\Gamma$ has a SIL pair $v_1$, $v_2$. We claim that then $v_1$ and $v_2$ are leaves of some 3-claw embedded in $\Gamma$ as full subgraph. 

In order to see this, we first check  that there is a vertex $w$ linked to both $v_1$ and $v_2$. (We remark that this argument appears  in \cite[Lemma 6.2]{GPR}, although we include it here for completeness.) Let $C$ be a shared component of the SIL; recall this means that $C$ is both a connected component of $\Gamma-\st(v_1)$ and of $\Gamma-\st(v_2)$. As $\Gamma$ is connected, there is some path in $\Gamma$ connecting $v_1$ to $C$. But the fact that $C$ is a connected component of $\Gamma-\st(v_1)$ implies that this path has length 2; in particular, there exists some vertex $w$ linked to $v_1$ and to some vertex $v_3$ in $C$. Now, since $C$ is also a connected component of $\Gamma-\st(v_2)$ we deduce that $w$ is linked to $v_2$ too.  At this point, we have our embedded 3-claw with center $w$ and leaves $v_1$, $v_2$ and $v_3$; it remains to check that it is a full subgraph. Observe first that $v_1$ and $v_2$ cannot be linked in $\Gamma$ by the definition of SIL. Neither can $v_1$ be linked to $v_3$ because of the choice of $C$; the same applies to $v_2$ and $v_3$. 
 We claim that (i) cannot hold for the 3-star formed by $v_1,v_2,v_3,w$. Otherwise, we may assume by symmetry that (say) $v_2$ and $v_3$ are connected  in $\Gamma-\st(v_1)$, which is again impossible  by the choice of $C$. 
 
 Conversely, assume that (i) fails. Up to relabeling we may assume that there is a full subgraph of $\Gamma$ which is a 3-claw with leaves  $v_1$, $v_2$ and $v_3$, in such way that $v_1$ and $v_2$ are not connected in $\Gamma-\st(v_3)$ and $v_1$ and $v_3$ are not connected in $\Gamma-\st(v_2)$. Let $C$ be the connected component of $\Gamma-\st(v_3)$ that contains $v_1$. Note that  $v_2\not\in C$, hence $C$ cannot be the dominant component of $\Gamma-\st(v_3)$ with respect to $v_2$. Assume that $C$ is a subordinate component; in other words, $C\subseteq L_2$ where $L_2\subseteq\Gamma-\st(v_2)$ is the dominant component with respect to $v_3$. Then, as $v_1\in C$ and $v_3\in L_2$, we would deduce that $v_1$ and $v_3$ are connected in $\Gamma-\st(v_2)$, which is a contradiction. At this point, $C$ must be a shared component, and thus $v_2$ and $v_3$ form  a SIL pair.  
 \end{proof}

A consequence of the proof of Proposition \ref{SILstar} is that, if $\Gamma$ is connected, then for each SIL pair $v$, $u$ there is a 3-claw with $v$ and $u$ as leaves that fails to satisfy the property in (i). This motivates the following definition.

\begin{definition} Fix a vertex $v\in\Gamma$, and $w$ a vertex linked to $v$. We say that $w$ is {\sl $v$-SIL irrelevant} if for any 3-claw $\Delta\subseteq\Gamma$, with center $w$ and $v$ a leaf, and which is a full subgraph of $\Gamma$, $\Delta$  satisfies condition (i)  of Proposition \ref{SILstar}. In other words, for any full subgraph with form

\medskip
\begin{center}
 \begin{tikzpicture}[scale=0.6]
 \draw[black]
    (0,0) -- (2, 0) -- (3,1);
      \draw[black]
    (2,0) -- (3,-1);
      \filldraw (0,0) circle (2pt) node[below=4pt]{$v$};
        \filldraw (2,0) circle (2pt) node[below=4pt]{$w$};
 \filldraw (3,1) circle (2pt) node[right=4pt]{$v_1$};
 \filldraw (3,-1) circle (2pt) node[right=4pt]{$v_2$};
\end{tikzpicture}
\end{center}
\medskip
we have (possibly after interchanging $v_1$ and $v_2$) that $v$ and $v_1$ are connected in $\Gamma-\st(v_2)$ and at least one of the following holds: either $v_1$ and $v_2$ are connected in $\Gamma-\st(v)$ or $v$ and $v_2$ are connected in $\Gamma-\st(v_1)$. The {\sl slink} of $v$  is now defined as the set of vertices in $\lk(v)$ which are $v$-SIL-irrelevant, that is:
$$\slk(v)=\{w\in\lk(v)\mid w\text{ is $v$-SIL irrelevant}\}.$$ Also, set
$$\sst(v)=\slk(v)\cup\{v\}.$$
\end{definition}

Note that if a vertex $v$ is not part of any SIL-pair, then $\st(v)=\sst(v)$. In fact, the name ``SIL-irrelevant" in the definition above refers to the fact that the removal of $w$ does not create new SIL-type relations (i.e. as in Theorem \ref{thm:prespaut} (iii)) between the partial conjugations by $v$ in the corresponding connected components (this will be clear below).

\begin{example}\label{silirrelevant} Consider the following graphs
\medskip
\begin{center}
 \begin{tikzpicture}[scale=0.6]
 \draw[black]
    (0,1) -- (4, 1)
    (1,0) -- (1,1)
    (3,0) -- (3,1)
     (3,0) -- (4,1);
      \draw[black]
    (9,1) -- (13,1)
    (10,0) -- (13,0)
    (10,0) -- (10,1)
    (11,0) -- (11,1)
    (12,0) -- (12,1)
    (13,0) -- (13,1);
      \filldraw (2,1) circle (2pt) node[above=3pt]{$v$};
 \filldraw (11,1) circle (2pt) node[above=3pt]{$v$};
 \filldraw (1,1) circle (2pt) node[above=3pt]{$w_1$};
 \filldraw (3,1) circle (2pt) node[above=3pt]{$w_2$};
 \filldraw (10,1) circle (2pt) node[above=3pt]{$w_3$};
 \filldraw (12,1) circle (2pt) node[above=3pt]{$w_4$};
 \filldraw (11,0) circle (2pt) node[below=3pt]{$w_5$};

       \filldraw (0,1) circle (2pt);
         \filldraw (1,0) circle (2pt);
           \filldraw (3,0) circle (2pt);
             \filldraw (4,1) circle (2pt);
               \filldraw (9,1) circle (2pt);
                 \filldraw (10,0) circle (2pt);
                   \filldraw (12,0) circle (2pt);
                     \filldraw (13,0) circle (2pt);
\filldraw (13,1) circle (2pt);
       
       \end{tikzpicture}
\end{center}
\medskip
In the first graph, $w_2$ is $v$-SIL irrelevant, but $w_1$ is not. Thus $\sst(v)=\{v,w_2\}$. In the second graph, $w_4$ and $w_5$ are both $v$-SIL irrelevant, while $w_3$ is not. Hence $\sst(v)=\{v,w_4,w_5\}$.
\end{example}

 Our next result relates irrelevant components with the connected components of the complement of a SIL: 
 
\begin{lemma}\label{concomp} Let $\Gamma$ be a connected graph, and suppose $u,v$ form a SIL-pair. Let $L_v$ be the connected component of $\Gamma-\sst(v)$ that contains $u$. Then, for any connected component $D$ of $\Gamma-\st(v)$ which is either a shared or the dominant component of the SIL, we have $D\subseteq L_v$. In other words, whenever  
\begin{itemize}
\item[(i)] either $D$ is also a connected component of $\Gamma-\st(u)$,

\item[(ii)] or $u\in D$,
\end{itemize}
we have $D\subseteq L_v$.
\end{lemma}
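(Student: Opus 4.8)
The statement asserts that any shared or dominant component $D$ of the SIL on $\{u,v\}$ (taken with respect to $v$, so $D$ is a connected component of $\Gamma-\st(v)$) is contained in the single connected component $L_v$ of the \emph{larger} set-complement $\Gamma-\sst(v)$ that contains $u$. Since $\sst(v)\subseteq\st(v)$, we have $\Gamma-\st(v)\subseteq\Gamma-\sst(v)$, so $D$ is a subset of $\Gamma-\sst(v)$; being connected in the smaller ambient graph it is connected in the larger one, hence $D$ lies inside a unique connected component of $\Gamma-\sst(v)$. The whole content of the lemma is therefore: \emph{that component is $L_v$}, i.e. $D$ and $u$ lie in the same component of $\Gamma-\sst(v)$.

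\textbf{Reduction to a connecting path.} It suffices to exhibit, for each such $D$, a path in $\Gamma-\sst(v)$ from some vertex of $D$ to $u$. I would treat the two cases of the SIL dichotomy. \emph{Case (ii): $u\in D$.} Here $D$ is the dominant component of $v$, so by definition $u$ and every vertex of $D$ are joined by a path inside $\Gamma-\st(v)\subseteq\Gamma-\sst(v)$; nothing to prove. \emph{Case (i): $D$ is a shared component} (so $D$ is a component of both $\Gamma-\st(v)$ and $\Gamma-\st(u)$, and $u\notin D$). Pick any $d\in D$. Using connectedness of $\Gamma$ and the argument already used in the proof of Proposition \ref{SILstar}: there is a length-$2$ path $v$–$w$–$d$, i.e.\ a common neighbour $w\in\lk(v)$ with $w$ adjacent to some $d\in D$, and this $w$ is also linked to $u$ (because $D$ is a component of $\Gamma-\st(u)$ and the path $u$–$d$ must exit $\st(u)$ only through a neighbour of $u$). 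So $w\in\lk(v)\cap\lk(u)$, and $\{v,w,u,d\}$ (with possibly $u$ replaced by the leaf the $3$-claw argument produces, but we may as well take the full subgraph the proof of Proposition \ref{SILstar} constructs) forms a full $3$-claw with center $w$ and leaves $v,u,d$. The key point is that \emph{$w$ is not $v$-SIL irrelevant}: if it were, condition (i) of Proposition \ref{SILstar} would hold for this claw, but the proof of Proposition \ref{SILstar} shows precisely that for a claw built from a shared component $C$ of a SIL, condition (i) fails. Hence $w\notin\slk(v)$, so $w\notin\sst(v)$, so $w$ is a legitimate vertex of $\Gamma-\sst(v)$.

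\textbf{Assembling the path.} Now build the path from $D$ to $u$ inside $\Gamma-\sst(v)$: start at $d\in D$, go to $w$ (the edge $d$–$w$ exists and neither endpoint is in $\sst(v)$, since $d\notin\st(v)\supseteq\sst(v)$ and $w\notin\sst(v)$ by the previous step), then go from $w$ to $u$ along the edge $w$–$u$ (again $w\notin\sst(v)$ and $u\notin\sst(v)$, the latter because $u\in\Gamma-\st(v)$). This is a path in $\Gamma-\sst(v)$ joining $D$ to $u$, which is exactly what was needed; therefore $D\subseteq L_v$.

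\textbf{Anticipated obstacle.} The only delicate point is the claim ``$w\notin\slk(v)$'' — one must make sure that the common neighbour $w$ extracted from connectedness really is the center of a \emph{full} $3$-claw with $v$ as a leaf for which condition (i) of Proposition \ref{SILstar} fails, so that $w$ is genuinely $v$-SIL relevant. This is handled by quoting the relevant portion of the proof of Proposition \ref{SILstar} verbatim: there it is shown that from a shared component of a SIL one produces a full $3$-claw (centre $w$, leaves $v_1,v_2,v_3$ in the notation there) violating (i); here $v$ plays the role of one leaf and $u$ (or the vertex $v_3\in D$) the role of another. One small care: the definition of $v$-SIL irrelevance quantifies over \emph{all} full $3$-claws with centre $w$ and $v$ a leaf, so exhibiting one bad claw is enough to conclude $w\notin\slk(v)$. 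No other step is more than routine, and the argument never needs to distinguish finely among subordinate components because those are simply not among the $D$'s under consideration.
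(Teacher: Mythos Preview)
Your proposal is correct and follows essentially the same route as the paper's proof: the dominant case is immediate from $\sst(v)\subseteq\st(v)$, and for a shared component you invoke the construction in the proof of Proposition~\ref{SILstar} to produce a common neighbour $w$ of $v$, $u$, and some $d\in D$, observe that the resulting full $3$-claw fails condition~(i), conclude $w\notin\sst(v)$, and use the path $d$--$w$--$u$ to place $D$ in $L_v$. The paper compresses all of this into two sentences, but the content is identical.
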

\begin{proof} Since $\sst(v)\subseteq\st(v)$ we deduce that if $D$ is the dominant component of $\Gamma-\st(v)$, then $D\subseteq L_v$. Now, assume that $D$ is shared. The proof of Proposition \ref{SILstar} implies that there  exists $w \notin \sst(v)$  which is linked to some vertex  in $C$, and also to both $v$ and $u$. As $u,w\in\Gamma-\sst(v)$ and $C\subseteq\Gamma-\st(v)\subseteq\Gamma-\sst(v)$ we deduce $C\subseteq L_v$ too.  
\end{proof}

\noindent{\bf The graph $\hat \Gamma$.} We are finally in a position to define the (simplicial) graph  $\hat\Gamma$ defining the RAAG $A$ which appears in the statement of Theorem \ref{thm:main2}. The vertices of $\hat \Gamma$ are in bijection with pairs $(L,v)$, where $v$ is a vertex of $\Gamma$ and $L$ is a connected component $\Gamma-\sst(v)$. 
 Two such vertices $(L,v)$ and $(T,u)$ are linked in $\hat\Gamma$ unless $v$ and $u$ are not linked in $\Gamma$, $u\in L$ and $v\in T$.
 
 \medskip
 
 We need one more technical result before proving Theorem \ref{thm:main2}:
 
 \begin{lemma}\label{nontrivial} Let $\Gamma$ be a connected graph, $v$ a vertex of $\Gamma$ and $L\subseteq\Gamma-\sst(v)$ a connected component. Then  $L\nsubseteq\st(v)$.
\end{lemma}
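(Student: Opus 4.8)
\textbf{Proof plan for Lemma \ref{nontrivial}.}

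The plan is to argue by contradiction: suppose $L$ is a connected component of $\Gamma-\sst(v)$ with $L\subseteq\st(v)$. Since $L$ is disjoint from $\sst(v)$ but contained in $\st(v)=\lk(v)\cup\{v\}$, every vertex of $L$ lies in $\lk(v)\setminus\slk(v)$; that is, each vertex $w\in L$ is linked to $v$ but is \emph{not} $v$-SIL irrelevant. First I would fix such a vertex $w\in L$ (note $L\neq\emptyset$, since connected components are nonempty) and unpack what it means for $w$ to fail $v$-SIL irrelevance: there is a full 3-claw in $\Gamma$ with center $w$, one leaf equal to $v$, and two other leaves $v_1,v_2$, such that the claw violates condition (i) of Proposition \ref{SILstar}. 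In particular $v_1,v_2\notin\st(v)$ (they are not linked to $v$ and are not $v$, since the claw is a full subgraph with $v$ a leaf) and $v_1,v_2$ are not linked to each other.

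Next I would locate $v_1$ (or $v_2$) relative to $L$. The edge $w v_1$ is an edge of $\Gamma$ with $w\in L$ and $v_1\notin\st(v)\supseteq\sst(v)$, so $v_1$ lies in $\Gamma-\sst(v)$; since $v_1$ is adjacent to $w\in L$ and $L$ is a connected component of $\Gamma-\sst(v)$, we get $v_1\in L$. Likewise $v_2\in L$. But then $v_1,v_2\in L\subseteq\st(v)$, contradicting $v_1,v_2\notin\st(v)$ established above. Hence no such $w$ exists, forcing $L=\emptyset$, which is absurd; therefore $L\nsubseteq\st(v)$.

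The only delicate point — and the step I'd treat most carefully — is the correct reading of the negation of "$w$ is $v$-SIL irrelevant". By definition $w$ is $v$-SIL irrelevant when \emph{every} full 3-claw with center $w$ and $v$ a leaf satisfies condition (i); so failure means \emph{some} such full 3-claw exists and violates (i). The existence of at least one such claw is exactly what I need, and the fact that it is a \emph{full} subgraph with $v$ a leaf is what guarantees the other two leaves $v_1,v_2$ avoid $\st(v)$ and are non-adjacent. One subtlety to record explicitly is that I do not even need the violation of (i) — the mere existence of a full 3-claw centered at $w$ with $v$ as a leaf already produces a leaf $v_1$ adjacent to $w$ with $v_1\notin\st(v)$, which is enough to reach the contradiction. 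So in fact the argument shows the stronger statement that if $L\subseteq\st(v)$ then every vertex of $L$ lies in no full 3-claw of the described form; combined with the structure of SIL-pairs this again collapses to $L=\emptyset$. I would present the contradiction in the cleaner form above rather than chasing this strengthening.
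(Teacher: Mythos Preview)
Your proposal is correct and follows essentially the same route as the paper: pick $w\in L$, use that $w\notin\sst(v)$ but $w\in\st(v)$ forces $w$ to fail $v$-SIL irrelevance, extract a full 3-claw centered at $w$ with $v$ a leaf, and observe that another leaf $v_1$ lies in $L$ (being adjacent to $w$ and outside $\sst(v)$) yet outside $\st(v)$. The only cosmetic difference is that the paper argues directly (exhibiting $v_1\in L\setminus\st(v)$) rather than by contradiction, and your observation that the actual failure of condition (i) is never used is accurate---the paper's proof likewise only exploits the existence of the full 3-claw, not the violation of (i).
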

\begin{proof}  Let $w\in L$. First, if $w \notin \st(v)$ we are done. Hence assume that $w \in \st(v)$. 
 Since $w\not\in\sst(v)$, we know that $w$ is not $v$-SIL irrelevant. In other words, there exists a 3-claw embedded as a full subgraph of $\Gamma$, with center $w$ and leaves $v$, $v_2$ and $v_3$, which fails to satisfy condition of (i) in Proposition \ref{SILstar}. In particular, $v_2\not\in\st(v)$ thus $v_2\not\in\sst(v)$ also but as $v_2$ and $w\in L$ are linked, $v_2\in L\cap(\Gamma-\st(v))$, as desired.
\end{proof}

We are finally in a position to prove Theorem \ref{thm:main2}: 

\begin{proof}[Proof of Theorem \ref{thm:main2}]

We first define $\varphi$ on the vertices of $\hat\Gamma$ by the formula
$$\varphi(L,v)=c_{L,v}$$
where recall that $c_{L,v}$ is the partial conjugation of $L$ by $v$. 
We first check that $\varphi$ is well-defined. To this end, it suffices to see that $\varphi$ preserves the standard relators of $A_{\hat\Gamma}$. Let 
$(L,v)$ and $(T,u)$ be two vertices. If either $u=v$ or $u\in \lk_\Gamma(v)$, then the partial conjugations $c_{L,v}$ and $c_{T,u}$ commute. 
So we may assume that $v$ and $u$ are not linked in $\Gamma$ and, say, $v\not\in T$. Note that $T\cap(\Gamma - \st(u))=T-(T\cap\st(v))$ is a union of connected components of $\Gamma-\st(v)$, denoted $D_1,\ldots,D_r$, so that
$$c_{T,u}=c_{D_1,u}\ldots c_{D_r,u}$$
 and that none of them is the dominant component of the SIL as $v\not\in D_1\cup\ldots\cup D_r$. Now we distinguish two cases: 
\begin{itemize}
\item[(i)] Suppose first that $L$ is the connected component of $\Gamma - \sst(v)$ that contains $u$. Then, Proposition \ref{concomp} yields that $L$ contains the dominant and all the shared components of $\Gamma-\st(v)$ with respect to $u$. Now, Theorem \ref{presA} implies that 
$[c_{L,v},c_{D_i,u}]=1$, for $i=1,\ldots,r$. Hence $[c_{L,v},c_{T,u}]=1$, as desired.

\item[(ii)] Otherwise, $L$ must be a union of shared components, and we are done again by Theorem \ref{presA}.
\end{itemize}

At this point, we know that the map $\varphi$ is well-defined. Next, let $\Lambda\subseteq\Gamma$ be the full subgraph whose vertices are linked to every vertex of $\Gamma$ other than themselves. By a result of Servatius \cite{Servatius}, the subgroup $A_\Lambda$ is precisely the center of $A_\Gamma$. Moreover, if $\Gamma_0=\Gamma-\Lambda$ then 
$A_{\Gamma_0}\cong A_\Gamma/A_\Lambda=\Inn(A_\Gamma).$
Thus we have a map 
$$\iota:A_{\Gamma_0}\to A_{\hat\Gamma}$$
given by the rule $$v\mapsto\prod\{(L,v)\mid L\text{ connected component of }\Gamma-\sst(v)\},$$
which is obviously well-defined. To see that its image is normal in $A_{\hat\Gamma}$ we proceed as follows. Let  $u,v$ be vertices of $\Gamma$, and $T$ any connected component of $\gamma-\sst(u)$.
If $v\not\in T$, then $$\iota(v)^{(T,u)}=\iota(v).$$
Otherwise, if $u\in L$,
$$(L,v)^{(T,u)}=(L,v)^{\iota(v)},$$
while if $u\notin L$
$$(L,v)^{(T,u)}=(L,v).$$
In particular,
$$\iota(v)^{(T,u)}=\iota(v)^{\iota(u)},$$ as desired. 

Now, the maps $\iota$ and $\varphi$ fit in a commutative diagram similar to the one in \cite[Page 9]{Charney}:

\begin{equation*}
\xymatrix{
&{1}\ar[r]&{A_{\Gamma_0}}\ar[r]^{\iota} \ar[d]_{\cong}
&{A_{\bar\Gamma}} \ar[r]\ar[d]_{\varphi} &{Q}\ar[r]\ar[d]_{\bar\varphi} &1 \\
&{1}\ar[r]&{\Inn(A_\Gamma)} \ar[r]
&{\im(\varphi)}\ar[r]&{\hat Q}\ar[r] &1 
}
\end{equation*}
where the injectivity of $\iota$ follows from the commutativity of the diagram, $Q$ and $\hat{Q}$ are the corresponding quotient groups and $\bar\varphi$ is the induced map.

We now check that  $\im(\varphi)$ is normal in $\PAut(A_\Gamma)$. Let $u,v$ be  vertices of $\Gamma$, $L$ a connected component of $\Gamma-\sst(v)$, and $C$ a connected component of $\Gamma-\st(u)$. The same argument used above to show that $\varphi$ is well-defined shows that $[c_{L,v},c_{C,u}]=1$, except  possibly in the case when $C$ is the dominant component, i.e., when $v\in C$. In this  case, let $T$ be the connected component of $\Gamma-\sst(u)$ that contains $C$. Then $T-(T\cap\st(u))=C\cup C_1\cup\ldots\cup C_s$  for some (non-dominant) components $C_1,\ldots,C_s$ and we get $$[c_{L,v},c_{C_i,u}]=1$$
for $i=1,\ldots,s$. As a consequence,
$$c_{L,v}^{c_{C,u}}=c_{L,v}^{c_{C,u}\prod_{i=1}^sc_{C_i,u}}=c_{L,v}^{c_{T,u}}\in\im(\varphi).$$

Finally, we claim that the induced map $\bar\varphi:Q \to \hat Q$ is injective. We follow the strategy of \cite[Theorem 3.6]{Charney}, and first show that $Q$ is abelian. To this end, let $(L,v)$ and $(T,u)$ be two vertices of $\hat\Gamma$, which are necessarily linked unless $v\in T$. In this case,  if $T_1,\ldots,T_r$ are the remaining connected components of $\Gamma-\sst(u)$, we have $[(L,v),(T_i,u)]=1$ for $i=1,\ldots,r$, and thus
$$[(L,v),(T,u)]=[(L,v),(T,u)\prod_{i=1}^r(T_i,u)]\in A_{\Gamma_0},$$ as desired. 
In fact, it is easy to give an explicit presentation of $Q$, through which one sees that $Q$ is the free abelian group generated by the cosets of the $(L,v)$'s, after removing exactly one of them for each $v$ (in an arbitrary manner). 

 To see that $\bar\varphi$ is injective, observe that if we had a linear combination $\bar{z}$ of elements such that $\bar\varphi(\bar{z})=1$ then any preimage $z\in A_{\hat{\Gamma}}$ would be such that $\varphi(z)\in\Inn(A_\Gamma)$. Let $(L,v)$ be some vertex  whose coset appears in $\bar{z}$. As $L\not\subseteq\st(v)$ by Lemma \ref{nontrivial}, $\varphi(z)$ conjugates any $w\in L$ by (at least) $v$. However, it does not conjugate any $u\in T_v$ and, using again that  $T_v\not\subseteq\st(v)$,
we deduce that $\varphi(z)$ cannot be inner. 
\end{proof}

\begin{remark}
Observe that if $\Gamma$ has no SIL, then $\sst(v)=\st(v)$ for any $v$ thus $\hat\Gamma$ has a vertex for each connected component of $\Gamma-\st(v)$ and the group $A_{\hat\Gamma}$ is precisely $\PAut(A_\Gamma)$, as shown already in \cite{Charney}.
\end{remark}

\begin{remark} In \cite{ChCV1}, in the case when $\Gamma$ is connected and has no triangles, the authors construct a map from a finite index subgroup of $\Out(A_\Gamma)$ to a product of groups of the form $\Out(F)$, with $F$ free. The kernel $K$ turns out to be abelian, and is generated by the cosets in $\Out(A_\Gamma)$ of those partial conjugations which are of the form  $c_{L,v}$ where $L$ is a connected component of $\Gamma-\{v\}$.  This construction was subsequently generalized  \cite{ChV2}  to  arbitrary graphs $\Gamma$.

 In general, if we consider the subgroup $N$ of $\PAut(A_\Gamma)$ generated by partial conjugations $c_{L,v}$, with $L$ a connected component of $\Gamma - \{v\}$, one can proceed along similar lines as above to show  that $N$ is a RAAG  containing $\Inn(A_\Gamma)$ and that it is normal in $\PAut(A_\Gamma)$ and the corresponding quotient is abelian. Moreover, $N$ embeds in $A_{\hat\Gamma}$, but in general  as a proper subgroup. For example, this is what happens for the graphs of Example \ref{silirrelevant}.
\label{rem:CV}
\end{remark}

We finish this section with a few words about a possible obstruction to the linearity of $\PAut(A_\Gamma)$ in the case when $\Gamma$ has a SIL. Consider the group $G$ defined as
$$G=(F_2\times F_2)*_t$$
where the stable group is the diagonal group $\Delta(F_2\times F_2)$ and $t$ acts as the identity on $\Delta(F_2\times F_2)$, i.e., $(g,g)^t=(g,g)$. $G$ admits the following explicit presentation:
$$G=\langle x,a,y,b,t\mid [x,y]=[x,b]=[a,y]=[a,b]=[xy,t]=[ab,t]=1\rangle.$$
 Prassidis and Metaftsis  have conjectured that this group is linear. In fact, if $G$ is not linear then the next lemma would imply that $\PAut(A_\Gamma)$ cannot be linear either, provided $\Gamma$ has a SIL:

 \begin{lemma} If $\Gamma$ has a SIL, then $\PAut(A_\Gamma)$ has a subgroup isomorphic to $(F_2\times F_2)*_t$ where $(g,g)^t=(g,g)$.
\end{lemma}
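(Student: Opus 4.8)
The plan is to produce the subgroup $(F_2\times F_2)*_t$ inside $\PAut(A_\Gamma)$ by exhibiting five explicit partial conjugations that, by the Koban--Piggott presentation (Theorem~\ref{presA}), satisfy exactly the defining relations of $G$, and then arguing that the subgroup they generate has the right structure. Since $\Gamma$ has a SIL, fix a SIL pair $u,v$ together with a shared component $C$ and let $D_u$ (resp.\ $D_v$) be the dominant component of $\Gamma-\st(v)$ containing $u$ (resp.\ of $\Gamma-\st(u)$ containing $v$). First I would set
\[
x=c_{C,v},\qquad a=c_{D_v,v},\qquad y=c_{C,u},\qquad b=c_{D_u,u},\qquad t=\text{a fifth partial conjugation},
\]
and check, using relations (i)--(iii) of Theorem~\ref{presA}, that $[x,y]=[x,b]=[a,y]=[a,b]=1$: indeed $C$ is shared with respect to both $v$ and $u$, the pairs $(C,v),(C,u)$ are ``both shared but distinct'' (relation (ii)), while $(C,v),(D_u,u)$ and $(D_v,v),(C,u)$ involve one shared and one dominant component for the \emph{same} SIL pair but with the roles arranged so that relation (ii) (subordinate/disjoint/containment) applies rather than (iii); one must be slightly careful here, but this is the content of the reformulation ``(iii) happens iff $A$ shared and $B$ dominant'' stated after Theorem~\ref{thm:prespaut}. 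This gives the $F_2\times F_2$ with the two ``diagonal-like'' pairs $\{x,y\}$ and $\{a,b\}$.

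The key point is to locate $t$ so that $[xy,t]=[ab,t]=1$ but $t$ does not individually commute with $x,a,y,b$. The natural candidate is $t=c_{C',w}$ for an appropriate third vertex $w$ and component $C'$, chosen so that $(C,v),(C,w)$ and $(C,u),(C,w)$ are SIL-type relations of the form (iii) — i.e.\ $w$ together with $v$ (resp.\ $u$) forms a SIL with $C$ shared — which forces $[c_{C,v}c_{?,v},c_{C,w}]=1$, and with the second term matching so that after multiplying we get exactly $[xy,t]=1$. Concretely, the proof of Proposition~\ref{SILstar} produces a vertex $w$ linked to $v$, $u$ and to a vertex $v_3\in C$, with $v_1v_2v_3w$ a full $3$-claw; this $w$ is a SIL partner for both $v$ and $u$ with $C$ shared, and relation (iii) applied to the pairs $(v,w)$ and $(u,w)$ should yield precisely $[xy,t]=1$ and $[ab,t]=1$ once $x,a,y,b$ are identified with the correct pieces of $c_{C,v},c_{C,u}$ and their dominant companions. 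So the main work is: (1)~verifying that $w,C'$ can be chosen so that $t=c_{C',w}$ satisfies $[xy,t]=[ab,t]=1$ via (iii), and (2)~checking $t$ is not redundant.

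Once the generators and relations are in place, $\varphi\colon G\to\PAut(A_\Gamma)$, $x\mapsto c_{C,v}$, etc., is a well-defined homomorphism because every relator of $G$ maps to a relator (or consequence of relators) of $\PAut(A_\Gamma)$. Injectivity is then proved by post-composing with a map that detects the generators: $c_{C,v}$ and $c_{D_v,v}$ conjugate vertices of $\Gamma$ (those in $C$, resp.\ $D_v$) that the others fix, so restricting to suitable generators of $A_\Gamma$ separates the images of $x,a,y,b$; a parallel argument with $u$-conjugations separates $y,b$ from $x,a$, and tracking the exponent of $v$ (resp.\ $u$, $w$) by which a fixed vertex of $C$ is conjugated distinguishes the $t$-coordinate. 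Alternatively, one may argue that $\langle x,a,y,b\rangle$ is visibly $F_2\times F_2$ (the images of $c_{C,v},c_{D_v,v}$ generate a free group of rank $2$ inside $\PAut$, likewise for $u$, and the two commute), that $t$ normalizes it inducing the diagonal action $g\mapsto g$ on $\Delta(F_2\times F_2)$, and that $t$ has infinite order modulo the subgroup, so the subgroup they generate is the HNN extension $(F_2\times F_2)*_t$ as claimed.

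\medskip

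The hard part will be pinning down the fifth generator $t$: one must find a single vertex $w$ and component $C'$ of $\Gamma-\sst(w)$ (or of $\Gamma-\st(w)$) such that the SIL-type relations of Theorem~\ref{presA}(iii) for the pairs $\{v,w\}$ and $\{u,w\}$ simultaneously collapse, after identifying the $F_2\times F_2$ generators correctly, to exactly $[xy,t]=1$ and $[ab,t]=1$ — and to do so while ensuring $t$ is not already in $\langle x,a,y,b\rangle$. Getting the bookkeeping of which component is shared, dominant, or subordinate exactly right, so that relation (ii) (commuting) holds for the four ``$F_2\times F_2$'' relations while relation (iii) produces the two ``$*_t$'' relations and nothing more, is where all the care is needed; the rest is a routine application of Theorem~\ref{presA} and a separation argument for injectivity.
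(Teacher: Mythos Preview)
Your proposal has two genuine gaps that prevent it from going through.

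First, the four elements you choose do \emph{not} generate a copy of $F_2\times F_2$. You assert $[x,y]=[c_{C,v},c_{C,u}]=1$ by saying ``both shared but distinct'', but here $A=B=C$, so they are not distinct. Concretely, for $z\in C$ one computes $c_{C,v}c_{C,u}(z)=u^{-1}v^{-1}zvu$ while $c_{C,u}c_{C,v}(z)=v^{-1}u^{-1}zuv$, and these differ because $u,v$ are not linked. None of the relations (i)--(iii) of Theorem~\ref{presA} applies to this pair, so $[x,y]\neq 1$. (There is also a notational issue: your $a=c_{D_v,v}$ has $v\in D_v$, so this is not a partial conjugation as written; even reinterpreting it as $c_{D_u,v}$ does not rescue the commutation pattern.)

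Second, your candidate for $t$ cannot work. The vertex $w$ you extract from Proposition~\ref{SILstar} lies in $\lk(v)\cap\lk(u)$; by relation~(i) any partial conjugation $c_{C',w}$ then commutes with every partial conjugation by $v$ or by $u$. Thus $t$ would be central in $\langle x,a,y,b,t\rangle$, yielding at best $(\text{something})\times\Z$ rather than the HNN extension.

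The paper's argument is quite different and avoids both problems. It first rewrites the presentation of $G$ to recognise it as a semidirect product $F_3\rtimes F_2$, with $K=\langle x,a,t\rangle\cong F_3$ normal and $\langle y,b\rangle\cong F_2$ acting by partial conjugations. It then picks $w\in C$ (so $u,v,w$ are pairwise non-linked and generate $F_3$ inside $A_\Gamma$, hence $c_u,c_v,c_w$ generate $F_3$ in $\Inn(A_\Gamma)$), and sends
\[
x\mapsto c_u,\quad a\mapsto c_v,\quad t\mapsto c_w,\quad y\mapsto c_{C,u^{-1}},\quad b\mapsto c_{C,v^{-1}}.
\]
The required relations $t^y=t^{x^{-1}}$ and $t^b=t^{a^{-1}}$ hold because $w\in C$ while $u,v\notin C$, and injectivity follows immediately from the semidirect-product description. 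The key idea you are missing is to use inner automorphisms (not partial conjugations) for three of the five generators, and to take $w$ inside the shared component rather than in the common link.
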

\begin{proof} Observe that we may express the defining relators of $G$ as
 $$\begin{aligned}
G=&\langle x,a,y,b,t\mid [x,y]=[x,b]=[a,y]=[a,b]=[xy,t]=[ab,t]=1\rangle\\
&\langle x,a,y,b,t\mid a^y=a^b=a, x^y=x^b=x, t^y=t^{x^{-1}}, t^b=t^{a^{-1}}\rangle,\\
\end{aligned}$$  
and that this means that $G$ is the semidirect product of the (normal) subgroup $K$ generated by $x$, $a$ and $t$, which is free of rank 3, and a free group of rank 2 generated by $y$ and $b$, where the action is given by partial conjugations. 

Now, assume that $\Gamma$ has a SIL pair $v$ and $u$, and let $C$ be a shared component. Choose some vertex $w\in C$ and consider the inner automorphisms $c_u$, $c_v$ and $c_w$ that conjugate by $u$, $v$ and $w$ respectively. Consider also the partial conjugations $c_{C,v^{-1}}$ and $c_{C,u^{-1}}$, and let $H$ be the subgroup of $\PAut(A_\Gamma)$ generated by these five elements. We claim that the map $f:G\to H$ induced by
$$
\begin{aligned}
x\mapsto c_u,\\
a\mapsto c_v,\\
t\mapsto c_w,\\
y\mapsto c_{C,u^{-1}},\\
b\mapsto c_{C,v^{-1}}\\
\end{aligned}$$
is an isomorphism. To see this, observe that as the vertices $v$, $u$ and $w$ are pairwise not linked, the subgroup of $A_\Gamma$ that they generate is free of rank 3, and  thus the same holds for the conjugations $c_u$, $c_v$, $c_w$. Thus the restriction of $f$ yields an isomorphism from $K$ to the subgroup generated by $c_u$, $c_v$ and $c_w$. The same happens for the restriction of $f$ to the subgroup of $G$ generated by $y$ and $b$ and the subgroup of $H$ generated by $c_{C,u^{-1}}$ and $c_{C,v^{-1}}$. Taking into account the structure of $G$ we deduce that $f$ is in fact an isomorphism.
\end{proof}

 \begin{remark}
It is not difficult to construct explicit linear representations of $G$.
Let $A,B,D,M$ be matrices so that $[A,D]=1=[B,D]$ and $A,B$ generate a free group (we might choose $D=I$). Represent the generators of $G$ as follows:
$$x\mapsto\text{diag}(A,I)$$
$$y\mapsto\text{diag}(I,A)$$
$$a\mapsto\text{diag}(B,I)$$
$$b\mapsto\text{diag}(I,B)$$
$$t\mapsto M\otimes D$$
where $\otimes$ is the Kronecker product. Note that $xy\mapsto\text{diag}(A,A)=I\otimes A$ which commutes with $M\otimes D$.
\end{remark}
In light of the above remark, a natural question is: 

\begin{question} Is there is some choice of $A$, $B$, $D$ and $M$ so that this representation is faithful?
\end{question}

\end{document}